% ----------------------------------------------------------------
% AMS-LaTeX Paper ************************************************
% **** -----------------------------------------------------------
\documentclass{amsart}
\usepackage{graphicx}
% ----------------------------------------------------------------
\vfuzz2pt % Don't report over-full v-boxes if over-edge is small
\hfuzz2pt % Don't report over-full h-boxes if over-edge is small
% THEOREMS -------------------------------------------------------
\newtheorem{thm}{Theorem}[section]

\newtheorem{lem}[thm]{Lemma}

\theoremstyle{definition}

\theoremstyle{remark} \theoremstyle{Proof}

\numberwithin{equation}{section}
% MATH -----------------------------------------------------------

% ----------------------------------------------------------------
\author[O. Ajebbar]{ Ajebbar Omar}
\address{Omar Ajebbar\\Department of Mathematics\\
Ibn Zohr University, Faculty of Sciences, Agadir\\
Morocco} \email{omar-ajb@hotmail.com}
\author[E. Elqorachi]{  Elqorachi Elqorachi}
\address{Elhoucien Elqorachi\\Department of Mathematics\\
Ibn Zohr University, Faculty of Sciences, Agadir\\
Morocco} \email{elqorachi@hotmail.com}
\begin{document}

\title[Stability of the Cosine-Sine Functional
equation]{Stability of the Cosine-Sine Functional Equation on
amenable groups}

\keywords{Hyers-Ulam stability; Semigroup; Amenable group; Cosine
equation; Sine equation; Additive function; Multiplicative
function.}
\thanks{2010 Mathematics Subject
Classification. Primary 39B82; Secondary 39B32}
%\submitted{May 04, 2018}%
%\date{}%
%\dedicatory{}%
%\commby{}%
% ----------------------------------------------------------------
\begin{abstract}
In this paper we establish the stability of the functional equation
\begin{equation*}f(xy)=f(x)g(y)+g(x)f(y)+h(x)h(y),\;x,y\in G,\end{equation*}
where $G$ is an amenable group.
\end{abstract}
\maketitle
% ----------------------------------------------------------------
\section{Introduction}
The stability problem of functional equations go back to 1940 when
Ulam \cite{Ulam} proposed a question concerning the stability of
group homomorphims. Hyers \cite{Hyers} gave a first affirmative
partial answer to the question of Ulam for Banach spaces. Hyers's
Theorem was generalized by Aoki \cite{Aoki} for additive mappings
and by Rassias \cite{Rassias} for linear mappings by considering an
unbounded Cauchy difference. The stability problem of several
functional equations have been extensively investigated by a number
of authors. An account on the further progress and developments in
this field can be found in \cite{Czerwik,Hyers et al.,Jung}.\\In
this paper we investigate the stability of the trigonometric
functional equation
\begin{equation}\label{eq:eq1}f(xy)=f(x)g(y)+g(x)f(y)+h(x)h(y),\,x,y\in G\end{equation}
on amenable groups.\\The continuous solutions of the trigonometric
functional equations
\begin{equation}\label{eq:eq2}f(xy)=f(x)g(y)+g(x)f(y),\,x,y\in G\end{equation}
and
\begin{equation}\label{eq:eq3}f(xy)=f(x)f(y)-g(x)g(y),\,x,y\in G\end{equation}
are obtained by Poulsen and Stetk{\ae}r \cite{Poulsen and Stetkaer},
where $G$ is a topological group that need not be abelian. Regular
solutions of (\ref{eq:eq2}) and (\ref{eq:eq3}) were described by
Acz\'{e}l \cite{Aczél} on abelian groups. Chung et al. \cite{Chung
et al.} solved the functional equation (\ref{eq:eq1}) on groups.
Recently, Ajebbar and Elqorachi \cite{Ajebbar and Elqorachi}
obtained the solutions of the functional equation (\ref{eq:eq1}) on
a semigroup generated by its squares. The stability properties of
the functional equations (\ref{eq:eq2}) and (\ref{eq:eq3}) have been
obtained by Sz\'{e}kelyhidi \cite{Székelyhidi3} on amenable groups.
\\The aim of the present paper is to extend the Sz\'{e}kelyhidi's
results \cite{Székelyhidi3} to the functional equation
(\ref{eq:eq1}).
\section{Definitions and preliminaries}
Throughout this paper $G$ denotes a semigroup (a set with an
associative composition) or a group. We denote by $\mathcal{B}(G)$
the linear space of all bounded complex-valued functions on $G$. We
call $a:G\rightarrow\mathbb{C}$ additive provided that
$a(xy)=a(x)+a(y)$ for all $x,y\in G$ and call
$m:G\rightarrow\mathbb{C}$ multiplicative provided that
$m(xy)=m(x)m(y)$ for all $x,y\in G$.\\
Let $\mathcal{V}$ be a linear space of complex-valued functions on
$G$. We say that the functions
$f_{1},\cdot\cdot\cdot,f_{n}:G\rightarrow\mathbb{C}$ are linearly
independent modulo $\mathcal{V}$ if
$\lambda_{1}\,f_{1}+\cdot\cdot\cdot+\lambda_{n}\,f_{n}\in
\mathcal{V}$ implies that
$\lambda_{1}=\cdot\cdot\cdot=\lambda_{n}=0$ for any
$\lambda_{1},\cdot\cdot\cdot,\lambda_{n}\in \mathbb{C}$. We say that
the linear space $\mathcal{V}$ is two-sided invariant if $f\in
\mathcal{V}$ implies that that the functions $x\mapsto f(xy)$ and
$x\mapsto f(yx)$ belong to $\mathcal{V}$ for any $y\in G$.\\
Notice that the linear space $\mathcal{B}(G)$ is two-sided
invariant.
\section{Basic results}
Throughout this section $G$ denotes a semigroup and $\mathcal{V}$ a
two-sided invariant linear space of complex-valued functions on $G$.
\begin{lem} Let $f,g,h:G\rightarrow\mathbb{C}$
be functions. Suppose that $f$, $g$ and $h$ are linearly independent
modulo $\mathcal{V}$. If the function
\begin{equation*}x\mapsto f(xy)-f(x)g(y)-g(x)f(y)-h(x)h(y)\end{equation*}
belongs to $\mathcal{V}$ for all $y\in G$, then there exist two
functions $\varphi_{1},\varphi_{2}\in \mathcal{V}$ such that
\begin{equation}\label{eq:eq4}\psi(x,y)=\varphi_{1}(x)f(y)+\varphi_{2}(x)h(y)\end{equation}
for all $x,y\in G$, where
\begin{equation}\label{eq:eq5}\psi(x,y):=f(xy)-f(x)g(y)-g(x)f(y)-h(x)h(y)\end{equation}
for $x,y\in G.$
\end{lem}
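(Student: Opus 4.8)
The plan is to exploit the associativity of the product in $G$ to get a functional relation for $\psi$, and then to feed that relation into a standard linear-algebra argument (of the type used by Székelyhidi) that forces the $y$-dependence of $\psi(x,y)$ to lie in the span of $f$ and $h$.

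First I would fix notation: write $F(x,y):=f(xy)$ and note by hypothesis that for each fixed $y$ the function $x\mapsto\psi(x,y)$ lies in $\mathcal V$. The first key step is to compute $f((xy)z)=f(x(yz))$ in two ways using the definition \eqref{eq:eq5} of $\psi$. Expanding $f(xy\cdot z)=f(xy)g(z)+g(xy)f(z)+h(xy)h(z)+\psi(xy,z)$ and $f(x\cdot yz)=f(x)g(yz)+g(x)f(yz)+h(x)h(yz)+\psi(x,yz)$, and then re-expanding the inner $f(xy)$, $f(yz)$ once more via \eqref{eq:eq5}, I get an identity of the form
\begin{equation*}
\psi(x,yz)-\psi(xy,z)=\big[\text{terms involving }f,g,h\text{ and }\psi(x,y),\psi(y,z)\big].
\end{equation*}
The point of this manipulation is that, after cancelling the genuinely symmetric pieces, the right-hand side, viewed as a function of $x$ for fixed $y,z$, is a linear combination of $f(x)$, $g(x)$, $h(x)$ and functions already known to be in $\mathcal V$ (namely $\psi(x,y)$, and $g(x)\cdot[\text{const}]$ absorbed appropriately). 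I will then use the hypothesis that $f,g,h$ are linearly independent modulo $\mathcal V$.

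The second key step is to substitute \eqref{eq:eq4} — wait, we do not yet have it; rather, the argument runs the other way. Let me instead argue as follows. Since $x\mapsto\psi(x,y)\in\mathcal V$ for every $y$, and $\mathcal V$ is two-sided invariant, the left side $\psi(x,yz)-\psi(xy,z)$ is, for fixed $y,z$, a member of $\mathcal V$ as a function of $x$. Comparing with the right-hand side and invoking linear independence of $f,g,h$ modulo $\mathcal V$, I conclude that the coefficients of $f(x)$, $g(x)$, $h(x)$ on the right must vanish, or rather must themselves be expressible through $\mathcal V$-valued data. Tracking the coefficient of $g(x)$ in particular yields a relation of the form $\psi(x,y)=a(y)f(x)+b(y)h(x)+(\text{element of }\mathcal V)$; but we want the $x$ and $y$ roles as in \eqref{eq:eq4}, so I would symmetrize. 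Concretely, I expect to reach
\begin{equation*}
\psi(x,y)=\varphi_1(x)f(y)+\varphi_2(x)h(y)+\varphi_0(x)g(y)
\end{equation*}
for suitable functions $\varphi_0,\varphi_1,\varphi_2$, and then a further use of associativity — or of the linear independence of $f,g,h$ applied in the $y$ variable together with $\psi(x,\cdot)$ needing no constraint — to kill the $\varphi_0 g$ term and to show $\varphi_1,\varphi_2\in\mathcal V$. Establishing $\varphi_1,\varphi_2\in\mathcal V$ should follow by choosing, via linear independence modulo $\mathcal V$, elements $y_1,y_2$ making the matrix $\big(f(y_i),h(y_i)\big)$ invertible modulo $\mathcal V$, so that $\varphi_1,\varphi_2$ become explicit linear combinations of $x\mapsto\psi(x,y_i)\in\mathcal V$.

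The main obstacle I anticipate is the bookkeeping in the associativity identity: after the double expansion there are many terms, and one must carefully separate those that are genuinely symmetric under $(x,y,z)\leftrightarrow$ reorderings (these cancel) from those that survive, and then correctly identify which surviving combinations are ``$f$-like,'' ``$g$-like,'' ``$h$-like,'' or already in $\mathcal V$. A secondary subtlety is that the naive output has a spurious $g(y)$-component, and removing it requires a second application of the hypotheses (likely testing the identity at a point where $g$ is linearly independent from $f,h$ modulo $\mathcal V$, or using that $\psi(x,y)$ being in $\mathcal V$ in $x$ forces the $g$-coefficient to behave consistently). Once these are handled, extracting $\varphi_1,\varphi_2\in\mathcal V$ is routine linear algebra as indicated above.
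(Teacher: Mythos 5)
Your overall strategy (use associativity of $G$ plus linear independence modulo $\mathcal{V}$ to compare coefficients) is indeed the skeleton of the paper's argument, but your plan as written has a genuine gap at its central step. If you expand $f((xy)z)=f(x(yz))$ using only the definition (\ref{eq:eq5}) of $\psi$ and re-expand the inner $f(xy)$ and $f(yz)$, the resulting identity still contains the terms $g(xy)f(z)+h(xy)h(z)$, and these, viewed as functions of $x$ for fixed $y,z$, are \emph{not} a priori linear combinations of $f(x),g(x),h(x)$ and elements of $\mathcal{V}$: the hypothesis of linear independence modulo $\mathcal{V}$ says nothing about the translates $x\mapsto g(xy)$ and $x\mapsto h(xy)$. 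So the coefficient comparison you propose cannot be carried out at that point. The missing idea is the paper's preliminary step: since $f$ and $h$ are linearly independent (pointwise, not just modulo $\mathcal{V}$), one can choose $y_{0},z_{0}$ with $f(y_{0})h(z_{0})-f(z_{0})h(y_{0})\neq0$ and, by applying (\ref{eq:eq5}) at $(x,y_{0})$ and $(x,z_{0})$ and solving the resulting $2\times2$ system, express $g$ and $h$ as linear combinations of $f$, the right-translates $x\mapsto f(xy_{0})$, $x\mapsto f(xz_{0})$, and the $\mathcal{V}$-functions $x\mapsto\psi(x,y_{0})$, $x\mapsto\psi(x,z_{0})$ (the paper's (\ref{eq:eq9})--(\ref{eq:eq10})). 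Only after substituting these expressions for $g(xy)$ and $h(xy)$, and re-expanding each $f(xy\,y_{0})$, $f(xy\,z_{0})$ again via (\ref{eq:eq5}), does every term become a multiple of $f(x)$, $g(x)$ or $h(x)$ plus a function of $x$ lying in $\mathcal{V}$ (here two-sided invariance is used for $x\mapsto\psi(xy,\cdot)$), so that independence modulo $\mathcal{V}$ applies.

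Two further points in your sketch do not survive scrutiny. First, the intermediate relation you anticipate, $\psi(x,y)=a(y)f(x)+b(y)h(x)+(\text{element of }\mathcal{V})$, has the variable roles reversed relative to (\ref{eq:eq4}), and ``symmetrizing'' is not an argument; in the paper the correct orientation comes out directly, because equating the coefficients of $g(x)$ in the two expansions yields a formula for $f(yz)$, and back-substitution of (\ref{eq:eq9})--(\ref{eq:eq10}) gives $\psi(y,z)=[\gamma_{1}\psi(y,y_{0})+\gamma_{2}\psi(y,z_{0})]f(z)+[\delta_{1}\psi(y,y_{0})+\delta_{2}\psi(y,z_{0})]h(z)$, where the coefficient functions are visibly in $\mathcal{V}$ since $x\mapsto\psi(x,y_{0})$, $x\mapsto\psi(x,z_{0})$ are. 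Second, no spurious $g(y)$-component arises in the correct computation, so the extra device you invoke to remove $\varphi_{0}g$ is not needed; conversely, in your version you give no actual mechanism to remove it. In short, the chosen points $y_{0},z_{0}$ and the elimination of $g$ and $h$ \emph{before} the associativity comparison are not bookkeeping details but the essential content of the proof, and your proposal does not supply them.
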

\begin{proof} We use a similar computation as the one of the proof
of \cite[Lemma 2.1]{Székelyhidi3}.\\Since the functions $f$, $g$ and
$h$ are linearly independent modulo  $\mathcal{V}$ so are $f$ and
$h$, then $f$ and $h$ are linearly independent. Then there exist
$y_{0}, z_{0}\in G$ such that
$f(y_{0})h(z_{0})-f(z_{0})h(y_{0})\neq0$, which implies that that
$f(y_{0})h(z_{0})\neq0$ or $f(z_{0})h(y_{0})\neq0$. We can finally
assume that $f(y_{0})\neq0$ and $h(z_{0})\neq0$. By applying
(\ref{eq:eq5}) to the pair $(x,y_{0})$ we derive
\begin{equation}\label{eq:eq6}g(x)=\alpha_{0}\,f(x)+\alpha_{1}\,h(x)+\alpha_{2}\,f(xy_{0})-\alpha_{2}\,\psi(x,y_{0})\end{equation}
for all $x\in G$, where
$\alpha_{0}:=-f(y_{0})^{-1}g(y_{0})\in\mathbb{C}$,
$\alpha_{1}:=-f(y_{0})^{-1}h(y_{0})\in\mathbb{C}$ and
$\alpha_{2}:=f(y_{0})^{-1}\in\mathbb{C}$ are constants. Similarly,
by applying (\ref{eq:eq5}) to pair $(x,z_{0})$, we get that
\begin{equation}\label{eq:eq7}h(x)=\beta_{0}\,f(x)+\beta_{1}\,g(x)+\beta_{2}\,f(xz_{0})-\beta_{2}\,\psi(x,z_{0})\end{equation}
for all $x\in G$, where
$\beta_{0}:=-h(z_{0})^{-1}g(z_{0})\in\mathbb{C}$,
$\beta_{1}:=-h(z_{0})^{-1}f(z_{0})\in\mathbb{C}$ and
$\beta_{2}:=h(z_{0})^{-1}\in\mathbb{C}$ are constants.\\Let $x\in G$
be arbitrary. Substituting (\ref{eq:eq7}) in (\ref{eq:eq6}) we
obtain
\begin{equation*}
\begin{split}g(x)&=\alpha_{0}\,f(x)+\alpha_{1}\,[\beta_{0}\,f(x)+\beta_{1}\,g(x)+\beta_{2}\,f(xz_{0})-\beta_{2}\,\psi(x,z_{0})]\\
&+\alpha_{2}\,f(xy_{0})-\alpha_{2}\,\psi(x,y_{0})\\
&=(\alpha_{0}+\alpha_{1}\,\beta_{0})\,f(x)+\alpha_{1}\,\beta_{1}\,g(x)+\alpha_{1}\,\beta_{2}\,f(xz_{0})-\alpha_{1}\,\beta_{2}\,\psi(x,z_{0})\\
&+\alpha_{2}\,f(xy_{0})-\alpha_{2}\,\psi(x,y_{0}).\end{split}\end{equation*}So
that
\begin{equation}\label{eq:eq8}\begin{split}(1-\alpha_{1}\beta_{1})\,g(x)&=(\alpha_{0}+\alpha_{1}\beta_{0})\,f(x)+\alpha_{1}\beta_{2}\,f(xz_{0})-\alpha_{1}\beta_{2}\,\psi(x,z_{0})\\
&+\alpha_{2}\,f(xy_{0})-\alpha_{2}\,\psi(x,y_{0}).\end{split}\end{equation}
Since $f(y_{0})h(z_{0})-f(z_{0})h(y_{0})\neq0$ and
$f(y_{0})h(z_{0})\neq0$ we get that $\alpha_{1}\,\beta_{1}\neq1$.
So, $x$ being arbitrary, we derive from (\ref{eq:eq8}) that there
exist $\gamma_{0},\,\gamma_{1},\,\gamma_{2}\in\mathbb{C}$ such that
\begin{equation}\label{eq:eq9}g(x)=\gamma_{0}\,f(x)+\gamma_{1}\,f(xy_{0})+\gamma_{2}\,f(xz_{0})-\gamma_{1}\,\psi(x,y_{0})-\gamma_{2}\,\psi(x,z_{0})\end{equation}
for all $x\in G$. Similarly we prove that there exist
$\delta_{0},\,\delta_{1},\,\delta_{2}\in\mathbb{C}$ such that
\begin{equation}\label{eq:eq10}h(x)=\delta_{0}\,f(x)+\delta_{1}\,f(xy_{0})+\delta_{2}\,f(xz_{0})-\delta_{1}\,\psi(x,y_{0})-\delta_{2}\,\psi(x,z_{0})\end{equation}
for all $x\in G$. Let $x,y,z\in G$ be arbitrary. In the following we
compute $f(xyz)$ first as $f((xy)z)$ and then as $f(x(yz))$. By
applying (\ref{eq:eq5}) to the pair $(xy,z)$, and taking
(\ref{eq:eq9}) and (\ref{eq:eq10}) into account, we obtain
\begin{equation*}
\begin{split}f((xy)z)&=f(xy)\,g(z)+g(xy)\,f(z)+h(xy)\,h(z)+\psi(xy,z)\\
&=[f(x)g(y)+g(x)\,f(y)+h(x)h(y)+\psi(x,y)]g(z)\\
&+[\gamma_{0}\,f(xy)+\gamma_{1}\,f(xyy_{0})+\gamma_{2}\,f(xyz_{0})-\gamma_{1}\,\psi(xy,y_{0})-\gamma_{2}\,\psi(xy,z_{0})]f(z)\\
&+[\delta_{0}\,f(xy)+\delta_{1}\,f(xyy_{0})+\delta_{2}\,f(xyz_{0})-\delta_{1}\,\psi(xy,y_{0})-\delta_{2}\,\psi(xy,z_{0})]h(z)\\
&+\psi(xy,z)\\
&=[f(x)g(y)+g(x)f(y)+h(x)h(y)+\psi(x,y)]g(z)\\
&+\gamma_{0}\,[f(x)g(y)+g(x)f(y)
+h(x)h(y)+\psi(x,y)]f(z)\\
&+\gamma_{1}\,[f(x)g(yy_{0})+g(x)f(yy_{0})+h(x)h(yy_{0})+\psi(x,yy_{0})]f(z)\\
&+\gamma_{2}\,[f(x)g(yz_{0})+g(x)f(yz_{0})+h(x)h(yz_{0})+\psi(x,yz_{0})]f(z)\\
&+\delta_{0}\,[f(x)g(y)+g(x)f(y)
+h(x)h(y)+\psi(x,y)]h(z)\\
&+\delta_{1}\,[f(x)g(yy_{0})+g(x)f(yy_{0})+h(x)h(yy_{0})+\psi(x,yy_{0})]h(z)\\
&+\delta_{2}\,[f(x)g(yz_{0})+g(x)f(yz_{0})+h(x)h(yz_{0})+\psi(x,yz_{0})]h(z)\\
&-[\gamma_{1}\,\psi(xy,y_{0})+\gamma_{2}\,\psi(xy,z_{0})]f(z)-[\delta_{1}\,\psi(xy,y_{0})+\delta_{2}\,\psi(xy,z_{0})]h(z)\\
&+\psi(xy,z).\end{split}\end{equation*} So that
\begin{equation}\label{eq:eq11}\begin{split}f((xy)z)&=f(x)[g(y)g(z)+\gamma_{0}\,g(y)f(z)+\gamma_{1}\,g(yy_{0})f(z)+\gamma_{2}\,g(yz_{0})f(z)\\
&+\delta_{0}\,g(y)h(z)
+\delta_{1}\,g(yy_{0})h(z)+\delta_{2}\,g(yz_{0})h(z)]\\
&+g(x)[f(y)g(z)+\gamma_{0}\,f(y)f(z)+\gamma_{1}\,f(yy_{0})f(z)
+\gamma_{2}\,f(yz_{0})f(z)\\
&+\delta_{0}\,f(y)h(z)
+\delta_{1}\,f(yy_{0})h(z)+\delta_{2}\,f(yz_{0})h(z)]\\
&+h(x)[h(y)g(z) +\gamma_{0}\,h(y)f(z)+\gamma_{1}\,h(yy_{0})f(z)
+\gamma_{2}\,h(yz_{0})f(z)\\
&+\delta_{0}\,h(y)h(z)+\delta_{1}\,h(yy_{0})h(z)
+\delta_{2}\,h(yz_{0})h(z)]\\
&+[\gamma_{0}\,\psi(x,y)+\gamma_{1}\,\psi(x,yy_{0})
+\gamma_{2}\,\psi(x,yz_{0})-\gamma_{1}\,\psi(xy,y_{0})\\
&-\gamma_{2}\,\psi(xy,z_{0})]f(z)
+\psi(x,y)g(z)+[\delta_{0}\,\psi(x,y)+\delta_{1}\,\psi(x,yy_{0})\\
&+\delta_{2}\,\psi(x,yz_{0})-\delta_{1}\,\psi(xy,y_{0})
-\delta_{2}\,\psi(xy,z_{0})]h(z)+\psi(xy,z).\end{split}\end{equation}
On the other hand, by applying (\ref{eq:eq5}) to the pair $(x,yz)$
we get that
\begin{equation}\label{eq:eq12}f(x(yz))=f(x)g(yz)+g(x)f(yz)+h(x)h(yz)+\psi(x,yz).\end{equation}
Now, let $y,\,z\in G$ be arbitrary. By assumption the functions
$$x\mapsto\psi(x,y),\,x\mapsto\psi(x,yy_{0}),\,x\mapsto\psi(x,yz_{0}),\,x\mapsto\psi(x,yz)$$
belong to $\mathcal{V}$. Moreover, since the linear space
$\mathcal{V}$ is two sided invariant the functions
$$x\mapsto\psi(xy,y_{0}),\,x\mapsto\psi(xy,z_{0}),\,x\mapsto\psi(xy,z)$$
belong to $\mathcal{V}$. Hence, by using (\ref{eq:eq11}),
(\ref{eq:eq12}) and the fact that $f$, $g$ and $h$ are linearly
independent modulo $\mathcal{V}$, we get that
\begin{equation}\label{eq:eq13}f(yz)=f(y)g(z)+[\gamma_{0}\,f(y)+\gamma_{1}\,f(yy_{0})+\gamma_{2}\,f(yz_{0})]f(z)\end{equation}
$$+[\delta_{0}\,f(y)+\delta_{1}\,f(yy_{0})+\delta_{2}\,f(yz_{0})]h(z).\quad\quad$$
From (\ref{eq:eq9}), (\ref{eq:eq10}) and (\ref{eq:eq13}) we get
\begin{equation*}\begin{split}f(yz)&=f(y)g(z)+[g(y)+\gamma_{1}\,\psi(y,y_{0})+\gamma_{2}\,\psi(y,z_{0})]f(z)\\
&+[h(y)+\delta_{1}\,\psi(y,y_{0})+\delta_{2}\,\psi(y,z_{0})]h(z)\\
&=f(y)g(z)+g(y)f(z)+h(y)h(z)+[\gamma_{1}\,\psi(y,y_{0})+\gamma_{2}\,\psi(y,z_{0})]f(z)\\
&+[\delta_{1}\,\psi(y,y_{0})+\delta_{2}\,\psi(y,z_{0})]h(z).\end{split}\end{equation*}
Hence, by using (\ref{eq:eq5}), we obtain
$$\psi(y,z)=[\gamma_{1}\,\psi(y,y_{0})+\gamma_{2}\,\psi(y,z_{0})]f(z)+[\delta_{1}\,\psi(y,y_{0})+\delta_{2}\,\psi(y,z_{0})]h(z).$$
So, $y$ and $z$ being arbitrary, we deduce (\ref{eq:eq4}) by putting
$$\varphi_{1}(x):=\gamma_{1}\,\psi(x,y_{0})+\gamma_{2}\,\psi(x,z_{0})$$
and
$$\varphi_{2}(x):=\delta_{1}\,\psi(x,y_{0})+\delta_{2}\,\psi(x,z_{0})$$
for all $x\in G$. This completes the proof of Lemma 3.1.\end{proof}
\begin{lem} Let $f,g,h:G\rightarrow\mathbb{C}$
be functions. Suppose that $f$ and $h$ are linearly independent
modulo $\mathcal{V}$ and $g\in \mathcal{V}$. If the function
\begin{equation*}x\mapsto f(xy)-f(x)g(y)-g(x)f(y)-h(x)h(y)\end{equation*}
belongs to $\mathcal{V}$ for all $y\in G$, then $g$ is
multiplicative.
\end{lem}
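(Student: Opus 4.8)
The plan is to use the hypothesis $g\in\mathcal{V}$ twice: first to simplify the defining relation, and then, at the very end, to force a scalar to vanish. Let $\psi$ be as in (\ref{eq:eq5}), so that by hypothesis $x\mapsto\psi(x,y)$ belongs to $\mathcal{V}$ for every $y\in G$. Since $g\in\mathcal{V}$, the function $x\mapsto g(x)f(y)$ belongs to $\mathcal{V}$ for every $y\in G$, hence so does $x\mapsto\Phi(x,y):=\psi(x,y)+g(x)f(y)=f(xy)-f(x)g(y)-h(x)h(y)$. Note also that, $f$ and $h$ being linearly independent modulo $\mathcal{V}$, in particular $h\notin\mathcal{V}$; thus $h\neq0$ and there is $z_{0}\in G$ with $h(z_{0})\neq0$.

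First I would compute $f(xyz)$ in two ways for arbitrary $x,y,z\in G$, using the identity $f(uv)=f(u)g(v)+h(u)h(v)+\Phi(u,v)$. Expanding $f((xy)z)$ and substituting $f(xy)=f(x)g(y)+h(x)h(y)+\Phi(x,y)$, expanding $f(x(yz))$ and substituting $f(yz)=f(y)g(z)+h(y)h(z)+\Phi(y,z)$, then equating the two and solving for $h(xy)h(z)$ yields
\begin{equation*}
h(xy)h(z)=f(x)\bigl[g(yz)-g(y)g(z)\bigr]+h(x)\bigl[h(yz)-h(y)g(z)\bigr]+\Phi(x,yz)-g(z)\Phi(x,y)-\Phi(xy,z).
\end{equation*}
For fixed $y,z\in G$ the three functions $x\mapsto\Phi(x,yz)$, $x\mapsto g(z)\Phi(x,y)$ and $x\mapsto\Phi(xy,z)$ belong to $\mathcal{V}$ (the last one because $\mathcal{V}$ is two-sided invariant), so the function
\begin{equation*}
x\mapsto h(xy)h(z)-f(x)\bigl[g(yz)-g(y)g(z)\bigr]-h(x)\bigl[h(yz)-h(y)g(z)\bigr]
\end{equation*}
belongs to $\mathcal{V}$.

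Specializing to $z=z_{0}$ and multiplying by $h(z_{0})^{-1}$, I get that for every $y\in G$ the function $x\mapsto h(xy)-c_{1}(y)f(x)-c_{2}(y)h(x)$ belongs to $\mathcal{V}$, where $c_{1}(y):=h(z_{0})^{-1}\bigl[g(yz_{0})-g(y)g(z_{0})\bigr]$ and $c_{2}(y):=h(z_{0})^{-1}\bigl[h(yz_{0})-h(y)g(z_{0})\bigr]$. Multiplying this by the scalar $h(z)$ and subtracting from the preceding relation shows that, for all $y,z\in G$, the function $x\mapsto\bigl[g(yz)-g(y)g(z)-c_{1}(y)h(z)\bigr]f(x)+\bigl[h(yz)-h(y)g(z)-c_{2}(y)h(z)\bigr]h(x)$ belongs to $\mathcal{V}$. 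Since $f$ and $h$ are linearly independent modulo $\mathcal{V}$, both coefficients are $0$; in particular $g(yz)-g(y)g(z)=c_{1}(y)h(z)$ for all $y,z\in G$. Now fix $y$ and read this as an identity in $z$: the function $z\mapsto g(yz)-g(y)g(z)$ belongs to $\mathcal{V}$ because $g\in\mathcal{V}$ and $\mathcal{V}$ is two-sided invariant, while it equals $c_{1}(y)h$ with $h\notin\mathcal{V}$; hence $c_{1}(y)=0$. As $y$ is arbitrary, $g(yz)=g(y)g(z)$ for all $y,z\in G$, so $g$ is multiplicative.

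The computational part — the two expansions of $f(xyz)$ and the bookkeeping of which terms fall into $\mathcal{V}$ — is routine. The two things to get right are the double use of $g\in\mathcal{V}$ (to discard the $g(x)f(y)$ term at the start and to kill $c_{1}(y)$ at the end) and the key observation that, although $x\mapsto h(xy)$ is a priori a new function, fixing one point $z_{0}$ with $h(z_{0})\neq0$ expresses it modulo $\mathcal{V}$ as a combination of $f$ and $h$, which is precisely what makes linear independence modulo $\mathcal{V}$ applicable.
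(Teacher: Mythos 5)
Your proof is correct, and it takes a genuinely more direct route than the paper's. The paper proves this lemma by rerunning the machinery of Lemma 3.1: it chooses $y_{0},z_{0}$ with $f(y_{0})h(z_{0})-f(z_{0})h(y_{0})\neq0$, expresses $g$ and $h$, up to error terms, as combinations of translates of $f$ (the identities (\ref{eq:eq6})--(\ref{eq:eq10})), feeds these into the two-fold expansion of $f(xyz)$ to get the long identity (\ref{eq:eq14}), uses $g\in\mathcal{V}$ only at that stage to place the right-hand side of (\ref{eq:eq14}) in $\mathcal{V}$ as a function of $x$, extracts (\ref{eq:eq15}) by linear independence of $f$ and $h$ modulo $\mathcal{V}$, and then applies $g\in\mathcal{V}$ together with the same independence once more to annihilate the right-hand side of (\ref{eq:eq15}). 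You instead spend the hypothesis $g\in\mathcal{V}$ at the very beginning, absorbing the term $g(x)f(y)$ into the $\mathcal{V}$-remainder $\Phi(x,y)=f(xy)-f(x)g(y)-h(x)h(y)$; this reduces the associativity computation to a few lines, requires only a single point $z_{0}$ with $h(z_{0})\neq0$ rather than the full nondegeneracy of the pair $(f,h)$, and makes the lemma logically independent of the computations of Lemma 3.1. Your closing step is the same in spirit as the paper's: for fixed $y$ the function $z\mapsto g(yz)-g(y)g(z)$ lies in $\mathcal{V}$ by two-sided invariance, and writing it as $c_{1}(y)h$ with $h\notin\mathcal{V}$ forces $c_{1}(y)=0$, whereas the paper keeps a combination of $f$ and $h$ on the right and invokes their joint independence modulo $\mathcal{V}$. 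Both arguments are sound; yours is shorter and self-contained, while the paper's recycles identities it has already set up and reuses later (e.g.\ in Lemma 3.4).
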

\begin{proof} Let $y,z\in G$ be arbitrary. By using the same computation as the one of the proof
of Lemma 3.1 we obtain from (\ref{eq:eq11}) and (\ref{eq:eq12}),
with the same notations, the following identity
\begin{equation*}\begin{split}&f(x)g(yz)+g(x)f(yz)+h(x)h(yz)+\psi(x,yz)\\
&=f(x)[g(y)g(z)+\gamma_{0}\,g(y)f(z)+\gamma_{1}\,g(yy_{0})f(z)+\gamma_{2}\,g(yz_{0})f(z)+\delta_{0}\,g(y)h(z)\\
&+\delta_{1}\,g(yy_{0})h(z)+\delta_{2}\,g(yz_{0})h(z)]+g(x)[f(y)g(z)+\gamma_{0}\,f(y)f(z)+\gamma_{1}\,f(yy_{0})f(z)\\
&+\gamma_{2}\,f(yz_{0})f(z)+\delta_{0}\,f(y)h(z)+\delta_{1}\,f(yy_{0})h(z)+\delta_{2}\,f(yz_{0})h(z)]+h(x)[h(y)g(z)\\
&+\gamma_{0}\,h(y)f(z)+\gamma_{1}\,h(yy_{0})f(z)+\gamma_{2}\,h(yz_{0})f(z)+\delta_{0}\,h(y)h(z)+\delta_{1}\,h(yy_{0})h(z)\\
&+\delta_{2}\,h(yz_{0})h(z)]+[\gamma_{0}\,\psi(x,y)+\gamma_{1}\,\psi(x,yy_{0})+\gamma_{2}\,\psi(x,yz_{0})
-\gamma_{1}\,\psi(xy,y_{0})\\
&-\gamma_{2}\,\psi(xy,z_{0})]
f(z)-\psi(x,y)g(z)+[\delta_{0}\,\psi(x,y)
+\delta_{1}\,\psi(x,yy_{0})+\delta_{2}\,\psi(x,yz_{0})\\
&-\delta_{1}\,\psi(xy,y_{0})
-\delta_{2}\,\psi(xy,z_{0})]h(z)+\psi(xy,z)
\end{split}\end{equation*}
for all $x\in G$. So that
\begin{equation}\label{eq:eq14}\begin{split}&f(x)[g(y)g(z)+\gamma_{0}\,g(y)f(z)+\gamma_{1}\,g(yy_{0})f(z)+\gamma_{2}\,g(yz_{0})f(z)+\delta_{0}\,g(y)h(z)\\
&+\delta_{1}\,g(yy_{0})h(z)+\delta_{2}\,g(yz_{0})h(z)-g(yz)]+h(x)[h(y)g(z)+\gamma_{0}\,h(y)f(z)\\
&+\gamma_{1}\,h(yy_{0})f(z)
+\gamma_{2}\,h(yz_{0})f(z)+\delta_{0}\,h(y)h(z)+\delta_{1}\,h(yy_{0})h(z)\\
&+\delta_{2}\,h(yz_{0})h(z)-h(yz)]\\
&=-g(x)[f(y)g(z)+\gamma_{0}\,f(y)f(z)+\gamma_{1}\,f(yy_{0})f(z)+\gamma_{2}\,f(yz_{0})f(z)+\delta_{0}\,f(y)h(z)\\
&+\delta_{1}\,f(yy_{0})h(z)+\delta_{2}\,f(yz_{0})h(z)-f(yz)]-[\gamma_{0}\,\psi(x,y)+\gamma_{1}\,\psi(x,yy_{0})\\
&+\gamma_{2}\,\psi(x,yz_{0})
-\gamma_{1}\,\psi(xy,y_{0})-\gamma_{2}\,\psi(xy,z_{0})]f(z)\\
&-[\delta_{0}\,\psi(x,y)+\delta_{1}\,\psi(x,yy_{0})+\delta_{2}\,\psi(x,yz_{0})-\delta_{1}\,\psi(xy,y_{0})
-\delta_{2}\,\psi(xy,z_{0})]h(z)\\
&-\psi(xy,z)+\psi(x,yz)\end{split}\end{equation} for all $x\in G$.
Since $g\in \mathcal{V}$, the function $x\mapsto\psi(x,t)$ belongs
to $\mathcal{V}$ for all $t\in G$ and $\mathcal{V}$ is a two
sided-invariant linear space of complex-valued functions on $G$, we
get that the right hand side of the identity (\ref{eq:eq14}) belongs
to $\mathcal{V}$ as a function in $x$, so does the left hand side of
(\ref{eq:eq14}). Since $f$ and $h$ are linearly independent modulo
$\mathcal{V}$, then we get that
\begin{equation}\begin{split}&g(y)g(z)+\gamma_{0}\,g(y)f(z)+\gamma_{1}\,g(yy_{0})f(z)+\gamma_{2}\,g(yz_{0})f(z)+\delta_{0}\,g(y)h(z)\\
&+\delta_{1}\,g(yy_{0})h(z)+\delta_{2}\,g(yz_{0})h(z)-g(yz)=0.\end{split}\end{equation}
So, $y$ and $z$ being arbitrary, then we get that
\begin{equation}\label{eq:eq15}\begin{split}g(yz)-g(y)g(z)&=[\gamma_{0}\,g(y)+\gamma_{1}\,g(yy_{0})+\gamma_{2}\,g(yz_{0})]f(z)\\
&+[\delta_{0}\,g(y)+\delta_{1}\,g(yy_{0})+\delta_{2}\,g(yz_{0})]h(z)\end{split}\end{equation}
for all $y,\,z\in G$. Now, let $y\in G$ be arbitrary. Since $g\in
\mathcal{V}$ and $\mathcal{V}$ is a two sided-invariant linear space
of complex-valued functions on $G$, we derive from (\ref{eq:eq15})
that the function
$$z\mapsto[\gamma_{0}\,g(y)+\gamma_{1}\,g(yy_{0})+\gamma_{2}\,g(yz_{0})]f(z)+[\delta_{0}\,g(y)+\delta_{1}\,g(yy_{0})+\delta_{2}\,g(yz_{0})]h(z)$$
belongs to $\mathcal{V}$. Hence, seeing that $f$ and $h$ are
linearly independent modulo $\mathcal{V}$, we get that
$\gamma_{0}\,g(y)+\gamma_{1}\,g(yy_{0})+\gamma_{2}\,g(yz_{0})=0$ and
$\delta_{0}\,g(y)+\delta_{1}\,g(yy_{0})+\delta_{2}\,g(yz_{0})=0$.
Substituting this back into (\ref{eq:eq15}) we obtain
$g(yz)=g(y)g(z)$ for all $z\in G$. So, $y$ being arbitrary, we
deduce that $g$ is multiplicative. This completes the proof of Lemma
3.2.\end{proof}
\begin{lem} Let $f,g,h:G\rightarrow\mathbb{C}$
be functions. Suppose that $f$ and $h$ are linearly dependent modulo
$\mathcal{V}$. If the function
\begin{equation*}x\mapsto f(xy)-f(x)g(y)-g(x)f(y)-h(x)h(y)\end{equation*}
belongs to $\mathcal{V}$ for all $y\in G$, then we have one of the
following possibilities:\\
(1) $f=0$, $g$ is arbitrary and $h\in \mathcal{V}$;\\
(2) $f,g,h\in \mathcal{V}$;\\
(3) $g+\frac{\lambda^{2}}{2}\,f=m-\lambda\,\varphi$,
$h-\lambda\,f=\varphi$, where $\lambda\in\mathbb{C}$ is a constant,
$\varphi\in \mathcal{V}$ and $m:G\rightarrow\mathbb{C}$ is a
multiplicative function such that
$m\in\mathcal{V}$;\\
(4) $f=\alpha\,m-\alpha\,b$,
$g=\frac{1-\alpha\,\lambda^{2}}{2}\,m+\frac{1+\alpha\,\lambda^{2}}{2}\,b-\lambda\,\varphi$,
$h=\alpha\lambda\,m-\alpha\lambda\,b+\varphi$, where
$\alpha,\lambda\in\mathbb{C}$ are constants,
$m:G\rightarrow\mathbb{C}$ is a multiplicative function and
$b,\varphi\in \mathcal{V}$;\\
(5) $f=f_{0}$,
$g=g_{0}-\frac{\lambda^{2}}{2}\,f_{0}-\lambda\,\varphi$,
$h=\lambda\,f_{0}+\varphi$, where $\lambda\in\mathbb{C}$ is a
constant, $\varphi\in\mathcal{V}$ and
$f_{0},g_{0}:G\rightarrow\mathbb{C}$ satisfy the sine functional
equation
$$f_{0}(xy)=f_{0}(x)g_{0}(y)+g_{0}(x)f_{0}(y),\,\,x,y\in G.$$
\end{lem}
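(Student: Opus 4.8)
The plan is to imitate the structure of the known description of the exact solutions of (\ref{eq:eq1}): complete the square in $h$, pass to a sine‑type relation, and then split into cases.

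First use the dependence hypothesis: either $f\in\mathcal V$, or $f\notin\mathcal V$ and then $h=\lambda f+\varphi$ for a (necessarily unique) constant $\lambda$ and some $\varphi\in\mathcal V$. If $f\in\mathcal V$, then since $\mathcal V$ is two‑sided invariant both $x\mapsto f(xy)$ and $x\mapsto f(x)g(y)$ lie in $\mathcal V$, so from $\psi(\cdot,y)\in\mathcal V$ we get $x\mapsto g(x)f(y)+h(x)h(y)\in\mathcal V$ for every $y$. If $f=0$ this forces $h\in\mathcal V$, which is possibility~(1); if $f\ne0$, it says $f(y)\,g+h(y)\,h\in\mathcal V$ for every $y$, and according as the vectors $(f(y),h(y))$ span $\mathbb C^{2}$ or all lie on a line one obtains $g,h\in\mathcal V$, which together with $f\in\mathcal V$ is possibility~(2).

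Now suppose $f\notin\mathcal V$. Substituting $h=\lambda f+\varphi$ into $\psi$ and discarding the terms $\varphi(x)\varphi(y)$ and $\lambda\varphi(x)f(y)$ (which, for fixed $y$, lie in $\mathcal V$ as functions of $x$), then completing the square, one finds that $x\mapsto f(xy)-f(x)G(y)-G(x)f(y)$ belongs to $\mathcal V$ for all $y$, where $G:=g+\tfrac{\lambda^{2}}{2}f+\lambda\varphi$; write $\sigma(x,y)$ for this difference, and note that $g$ and $h$ are recovered from $f,G,\lambda,\varphi$. Next split on whether $f$ and $G$ are linearly dependent modulo $\mathcal V$. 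If they are, write $G=\mu f+b$ with $b\in\mathcal V$; then $\sigma(\cdot,y)\in\mathcal V$ collapses to $x\mapsto f(xy)-f(x)M(y)\in\mathcal V$ for all $y$, where $M:=2\mu f+b$. Computing $f(xyz)$ both as $f((xy)z)$ and as $f(x(yz))$ and using two‑sided invariance to see that the error terms lie in $\mathcal V$, we get $f(x)\,(M(y)M(z)-M(yz))\in\mathcal V$ in $x$; since $f\notin\mathcal V$, $M$ is multiplicative. If $M\in\mathcal V$ (equivalently $\mu=0$) then $G=M=:m$ is a multiplicative element of $\mathcal V$, which is possibility~(3); if $M\notin\mathcal V$ ($\mu\ne0$), setting $m:=M$ and $\alpha:=1/(2\mu)$ gives $f=\alpha(m-b)$ and $G=\tfrac12(m+b)$, and feeding these into the formulas for $g,h$ yields exactly possibility~(4).

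Finally suppose $f$ and $G$ are linearly independent modulo $\mathcal V$. An associativity computation parallel to that in the proof of Lemma~3.1 first shows that $x\mapsto G(xy)$ lies in the linear span of $f$, $G$ and $\mathcal V$; feeding this back into a second application of associativity and comparing, modulo $\mathcal V$, the coefficients of $f(x)$ and of $G(x)$ (legitimate since $f,G$ are independent modulo $\mathcal V$), one extracts $\sigma(x,y)=\phi(x)f(y)$ with $\phi\in\mathcal V$. It then suffices to show $\phi\equiv0$, for then $(f,G)$ satisfies the sine equation exactly and we are in possibility~(5). To do this, assume $\phi\ne0$; further associativity computations, now exact since $\sigma(x,y)=\phi(x)f(y)$, force $G$ and hence $K:=G+\phi$ to be multiplicative and show that both $f$ and $\phi$ satisfy $u(xy)=u(x)G(y)+K(x)u(y)$; the rigidity of this (two‑character) sine equation then forces $f$ to be a scalar multiple of $\phi$, whence $f\in\mathcal V$, a contradiction. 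Thus $\phi\equiv0$ and possibility~(5) holds. The delicate part of the whole argument is precisely this last subcase — isolating the shape $\sigma(x,y)=\phi(x)f(y)$ and then eliminating $\phi$ — where one must track scrupulously which functions are known only modulo $\mathcal V$ and which identities are exact, so that the iterated passage to associativity really does pin down a multiplicative function and force the residual perturbation to vanish.
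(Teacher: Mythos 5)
Your treatment of all branches except the last one is sound, and it is a genuinely different route from the paper: where the paper splits on $h\in\mathcal V$ versus $h\notin\mathcal V$ and twice quotes Sz\'ekelyhidi's Lemma 2.2 as a black box, you re-derive the needed sine-stability dichotomy yourself. The case $f\in\mathcal V$ (giving (1) or (2)), the reduction $h=\lambda f+\varphi$, the completion of the square producing $G=g+\tfrac{\lambda^{2}}{2}f+\lambda\varphi$ with $\sigma(\cdot,y)\in\mathcal V$, and the dependent subcase $G=\mu f+b$ (where $M=2\mu f+b$ is shown multiplicative and (3), (4) are recovered according as $\mu=0$ or $\mu\neq0$) are all correct. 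The gap is exactly in the final subcase, the one you yourself flag as delicate. Once $\sigma(x,y)=\phi(x)f(y)$, i.e. $f(xy)=f(x)G(y)+K(x)f(y)$ with $K=G+\phi$, the ``exact'' associativity computation does \emph{not} force $G$ (nor $K$) to be multiplicative: equating $f((xy)z)$ with $f(x(yz))$ gives only $f(x)\,[G(y)G(z)-G(yz)]=[K(x)K(y)-K(xy)]\,f(z)$, hence $G(yz)-G(y)G(z)=c(y)f(z)$ and $K(xy)-K(x)K(y)=f(x)c(y)$ for some function $c$, and nothing in an exact identity eliminates $c$ — indeed in the very solutions you are heading for in case (5) (sine/cosine pairs, $\mathcal V$ the bounded functions) $G$ is \emph{not} multiplicative, so no manipulation of exact identities alone can yield your intermediate claim. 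The subsequent ``rigidity'' step is also unjustified at the lemma's level of generality (a semigroup, no commutativity assumed): solutions of $u(xy)=u(x)G(y)+K(x)u(y)$ correspond to homomorphisms into the group of lower-triangular $2\times2$ matrices with diagonal entries $G$ and $K$, and on a free (semi)group there are many solutions besides the scalar multiples of $K-G$.

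What actually closes this subcase is a second reduction modulo $\mathcal V$, reusing the hypothesis that $f$ and $G$ are linearly independent modulo $\mathcal V$ — precisely the ingredient absent from your sketch of the endgame. From $K=G+\phi$ and the two consequences of associativity above one gets the exact identity $c(x)f(y)+\phi(xy)-G(x)\phi(y)-\phi(x)G(y)-\phi(x)\phi(y)=f(x)c(y)$; for fixed $y$ the terms involving $\phi$ in the variable $x$ lie in $\mathcal V$ (two-sided invariance), so $c(x)f(y)\equiv f(x)c(y)+G(x)\phi(y)\pmod{\mathcal V}$ as functions of $x$. Fixing $y_{1}$ with $f(y_{1})\neq0$ gives $c\equiv\alpha f+\beta G\pmod{\mathcal V}$ with $\alpha=c(y_{1})/f(y_{1})$, $\beta=\phi(y_{1})/f(y_{1})$; substituting back and comparing the coefficients of $f(x)$ and $G(x)$ modulo $\mathcal V$ yields the exact relations $c=\alpha f$ and $\phi=\beta f$. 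Since $\phi\in\mathcal V$ and $f\notin\mathcal V$, $\beta=0$, so $\phi=0$, $(f,G)$ satisfies the sine equation, and (5) follows. As written, your final subcase is not a proof; either supply an argument of this kind or do as the paper does and invoke Sz\'ekelyhidi's Lemma 2.2 directly.
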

\begin{proof} Let $\psi$ be the function defined in
(\ref{eq:eq5}). If $f=0$ then $g$ is arbitrary and the function
$x\mapsto h(x)h(y)$ belongs to $\mathcal{V}$ for all $y$ in $G$.
Hence $h\in\mathcal{V}$. The result occurs in $(1)$ of Lemma 3.3. In
what follows we assume that $f\neq0$. We have the following
cases\\
\underline{Case 1}: $h\in\mathcal{V}$. Then the function $x\mapsto
h(x)h(y)$ belongs to $\mathcal{V}$ for all $y$ in $G$. So that the
function $x\mapsto f(xy)-f(x)g(y)-g(x)f(y)$ belongs to $\mathcal{V}$
for all $y$ in $G$. So, according to \cite[Lemma 2.2]{Székelyhidi3}
and taking into account that $f\neq0$, we get that one of the following possibilities holds\\
(i) $f,g,h\in \mathcal{V}$ which occurs in (2) of Lemma
3.3.\\
(ii) $g=m$ and $h=\varphi$, where $\varphi\in \mathcal{V}$ and
$m:G\rightarrow\mathbb{C}$ is a multiplicative function such that
$m\in\mathcal{V}$. This is the result (3) of Lemma 3.3 for
$\lambda=0$.\\
(iii) $f=\alpha\,m-\alpha\,b$, $g=\frac{1}{2}m+\frac{1}{2}b$,
$h=\varphi$,where $\alpha\in\mathbb{C}$ is a constant,
$m:G\rightarrow\mathbb{C}$ is a multiplicative function and
$b,\varphi\in \mathcal{V}$. This is the result (4) of Lemma 3.3 for
$\lambda=0$.\\
(iv) $f(xy)=f(x)g(y)+g(x)f(y)$ for all $x,y\in G$ and $h=\varphi$,
where $\varphi\in\mathcal{V}$, which is the result (5) of Lemma 3.3 for $\lambda=0$.\\
\underline{Case 2}: $h \not\in\mathcal{V}$. If $f\in\mathcal{V}$
then the function $x\mapsto f(xy)$ belongs to $\mathcal{V}$ for all
$y\in G$, because the linear space $\mathcal{V}$ is two-sided
invariant. As the function $x\mapsto \psi(x,y)$ belongs to
$\mathcal{V}$ for all $y\in G$ we get that the function $x\mapsto
g(x)f(y)+h(x)h(y)$ belongs to $\mathcal{V}$ for all $y\in G$. Since
$h\not \in\mathcal{V}$ we have $h\neq0$. We derive that there exist
a constant $\alpha\in\mathbb{C}\setminus\{0\}$ and a function
$k\in\mathcal{V}$ such that
\begin{equation}\label{eq:eq16}h=\alpha\,g+k,\end{equation}
so that
\begin{equation*}\begin{split}\psi(x,y)&=f(xy)-f(x)g(y)-g(x)f(y)-(\alpha\,g(x)+k(x))(\alpha\,g(y)+k(y))\\
&=f(xy)-f(x)g(y)-g(x)f(y)-\alpha^{2}\,g(x)g(y)-\alpha\,g(x)k(y)-\alpha\,k(x)g(y)\\
&-k(x)k(y)\\
&=f(xy)-k(x)k(y)-g(x)[f(y)+\alpha^{2}\,g(y)+\alpha\,k(y)]-g(y)[f(x)+\alpha\,k(x)]\\
&=f(xy)-k(x)k(y)-g(x)[f(y)+\alpha\,h(y)]-g(y)[f(x)+\alpha\,k(x)]\end{split}\end{equation*}
for all $x,y\in G$. Since the functions $x\mapsto f(xy)$, $x\mapsto
k(x)k(y)$, $x\mapsto g(y)[f(x)+\alpha\,k(x)]$ and $x\mapsto
\psi(x,y)$ belong to $\mathcal{V}$ for all $y\in G$, we derive from
the identity above that the function $x\mapsto
g(x)[f(y)+\alpha\,h(y)]$ belongs to $\mathcal{V}$ for all $y\in G$,
which implies that that $g\in\mathcal{V}$ or $f(y)+\alpha\,h(y)=0$
for all $y\in G$. Hence, since $\alpha\in\mathbb{C}\setminus\{0\}$,
we get that $g\in\mathcal{V}$ or $h=-\dfrac{1}{\alpha}\,f$. So,
taking (\ref{eq:eq16}) into account, we get that $h\in\mathcal{V}$;
which contradicts the assumption on $h$, hence
$f\not\in\mathcal{V}$. As $f$ and $h$ are linearly dependent modulo
$\mathcal{V}$ we infer that there exist a constant
$\lambda\in\mathbb{C}\setminus\{0\}$ and a function
$\varphi\in\mathcal{V}$ such that
\begin{equation}\label{eq:eq17}h=\lambda\,f+\varphi.\end{equation}
So we get from (\ref{eq:eq5}) that
\begin{equation*}\begin{split}\psi(x,y)&=f(xy)-f(x)g(y)-g(x)f(y)-(\lambda\,f(x)+\varphi(x))(\lambda\,f(y)+\varphi(y))\\
&=f(xy)-f(x)g(y)-g(x)f(y)-\lambda^{2}\,f(x)f(y)-\lambda\,f(x)\varphi(y)-\lambda\,\varphi(x)f(y)\\
&-\varphi(x)\varphi(y)\\
&=f(xy)-\varphi(x)\varphi(y)-f(x)[g(y)+\frac{\lambda^{2}}{2}\,f(y)+\lambda\,\varphi(y)]\\
&-[g(x)+\frac{\lambda^{2}}{2}\,f(x)+\lambda\,\varphi(x)]f(y),\end{split}\end{equation*}
for all $x,y\in G$, which implies that that
\begin{equation}\label{eq:eq18}\psi(x,y)+\varphi(x)\varphi(y)=f(xy)-f(x)\phi(y)-\phi(x)f(y)\end{equation}
for all $x,y\in G$, where
\begin{equation}\label{eq:eq19}\phi:=g+\frac{\lambda^{2}}{2}\,f+\lambda\,\varphi.\end{equation}
Since $\varphi\in\mathcal{V}$ and the function $x\mapsto\psi(x,y)$
belongs to $\mathcal{V}$ for all $y\in G$ we get from
(\ref{eq:eq18}) that the function $$x\mapsto
f(xy)-f(x)\phi(y)-\phi(x)f(y)$$ belongs to $\mathcal{V}$ for all
$y\in G$. Moreover $\mathcal{V}$ is a two-sided invariant linear
space of complex-valued function. Hence, according to \cite[Lemma
2.2]{Székelyhidi3} and taking into account that $f,h\not\in
\mathcal{V}$, we have one of the following
possibilities:\\
(i) $\phi=m$ where $m\in\mathcal{V}$ is multiplicative. Then we get,
from (\ref{eq:eq19}) and (\ref{eq:eq17}), that
$g+\frac{\lambda^{2}}{2}\,f=m-\lambda\,\varphi$ and
$h-\lambda\,f=\varphi$, where $\varphi\in\mathcal{V}$. The result
occurs in (3) of Lemma 3.3.\\
(ii) $f=\alpha\,m-\alpha\,b$, $\phi=\frac{1}{2}m+\frac{1}{2}b$,
where $m:G\rightarrow\mathbb{C}$ is multiplicative,
$b:G\rightarrow\mathbb{C}$ is in $\mathcal{V}$ and
$\alpha\in\mathbb{C}$ is a constant. Taking (\ref{eq:eq19}) and
(\ref{eq:eq17}) into account, we obtain respectively
$$g=\frac{1}{2}m+\frac{1}{2}b-\frac{\lambda^{2}}{2}\,(\alpha\,m-\alpha\,b)-\lambda\,\varphi$$
$$=\frac{1-\alpha\,\lambda^{2}}{2}m+\frac{1+\alpha\,\lambda^{2}}{2}b-\lambda\,\varphi\quad$$
and
$$h=\alpha\lambda\,m-\alpha\lambda\,b+\varphi.$$
So the result (4) of Lemma 3.3 holds.\\
(iii) $f(xy)=f(x)\phi(y)+\phi(x)f(y)$ for all $x,y\in G$. The result
(5) of Lemma 3.3 holds easily by using the identities
(\ref{eq:eq17}) and (\ref{eq:eq19}). This completes the proof of
Lemma 3.3.\end{proof}
\begin{lem} Let $f,g,h:G\rightarrow\mathbb{C}$
be functions. Suppose that $f$ and $h$ are linearly independent
modulo $\mathcal{V}$. If the functions
\begin{equation*}x\mapsto f(xy)-f(x)g(y)-g(x)f(y)-h(x)h(y)\end{equation*}
and
\begin{equation*}x\mapsto f(xy)-f(yx)\end{equation*}
belong to $\mathcal{V}$ for all $y\in G$, then we have one of the
following possibilities:\\
(1) $f=-\lambda^{2}\,f_{0}+\lambda^{2}\,\varphi$,
$g=\frac{1+\rho^{2}}{2}\,f_{0}+\rho\,g_{0}+\frac{1-\rho^{2}}{2}\,\varphi$,
$h=\lambda\rho\,f_{0}+\lambda\,g_{0}-\lambda\rho\,\varphi$, where
$\lambda\in\mathbb{C}\setminus\{0\},\rho\in\mathbb{C}$ are
constants, $\varphi\in \mathcal{V}$ and
$f_{0},g_{0}:G\rightarrow\mathbb{C}$ satisfy the cosine functional
equation
$$f_{0}(xy)=f_{0}(x)f_{0}(y)-g_{0}(x)g_{0}(y)$$ for all $x,y\in
G$;\\
(2)
\begin{equation*}\begin{split}f(xy)-\lambda^{2}\,M(xy)&=(f(x)-\lambda^{2}\,M(x))m(y)+m(x)(f(y)
-\lambda^{2}\,M(y))\\&+\lambda^{2}\,m(xy)+\psi(x,y)\end{split}\end{equation*}
for all $x,y\in G,$$$g=\frac{1}{2}\beta^{2}\,f+\beta\,h+m$$ and
$$\beta\,f+h=\lambda\,M-\lambda\,m,$$ where
$\beta\in\mathbb{C},\,\lambda\in\mathbb{C}\setminus\{0\}$ are
constants, $m,M:G\rightarrow\mathbb{C}$ are multiplicative functions
such that $m\in\mathcal{V}$, $M\not\in\mathcal{V}$ and $\psi$ is the function defined in (\ref{eq:eq5});\\
(3) $$f(xy)=f(x)m(y)+m(x)f(y)+H(x)H(y)+\psi(x,y),$$
$$g=\frac{1}{2}\beta^{2}\,f+\beta\,h+m$$
and
$$H(xy)-m(x)H(y)-H(x)m(y)=\eta_{1}\,\psi(x,y)+\eta_{2}\,m(x)L_{1}(y)+\eta_{3}\,m(x)L_{2}(y)$$
$$+\eta_{4}\,\psi(x,l_{1}(y))+\eta_{5}\,\psi(x,l_{2}(y))+\eta_{6}\,L_{1}(xy)+\eta_{7}\,L_{2}(xy)$$
for all $x,y\in G$, where
$\beta,\eta_{1},\cdot\cdot\cdot,\eta_{7}\in\mathbb{C}$ are
constants, $m:G\rightarrow\mathbb{C}$ is a multiplicative function
in $\mathcal{V}$, $L_{1},L_{2}\in\mathcal{V}$,
$l_{1},l_{2}:G\rightarrow G$
are mappings, $H=\beta\,f+h$ and $\psi$ is the function defined in (\ref{eq:eq5});\\
(4) $f(xy)=f(x)g(y)+g(x)f(y)+h(x)h(y)$ for all $x,y\in G$.
\end{lem}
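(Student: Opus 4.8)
The plan is to split according as $f,g,h$ are linearly independent modulo $\mathcal{V}$ or not, and in each case to reduce a suitable derived identity either to one of Lemmas 3.1--3.2 or to Szekelyhidi's stability theorems for the sine and cosine equations \cite{Sz�kelyhidi3}. Throughout write $\psi$ for the function (\ref{eq:eq5}); since its bilinear part is symmetric,
$$\psi(x,y)-\psi(y,x)=f(xy)-f(yx),\qquad x,y\in G,$$
so the second hypothesis says exactly that $x\mapsto\psi(x,y)-\psi(y,x)$ lies in $\mathcal{V}$ for every $y$. Suppose first that $f,g,h$ are linearly independent modulo $\mathcal{V}$. Then Lemma 3.1 furnishes $\varphi_{1},\varphi_{2}\in\mathcal{V}$ with $\psi(x,y)=\varphi_{1}(x)f(y)+\varphi_{2}(x)h(y)$; inserting this into the displayed identity, the summand $\varphi_{1}(x)f(y)+\varphi_{2}(x)h(y)$ lies in $\mathcal{V}$ as a function of $x$, and so does $f(xy)-f(yx)$, whence $x\mapsto\varphi_{1}(y)f(x)+\varphi_{2}(y)h(x)\in\mathcal{V}$ for each $y$, and the linear independence of $f$ and $h$ modulo $\mathcal{V}$ forces $\varphi_{1}=\varphi_{2}=0$. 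Thus $\psi\equiv0$ and (\ref{eq:eq1}) holds, i.e. possibility (4).

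Now suppose $f,g,h$ are linearly dependent modulo $\mathcal{V}$. Since $f,h$ are independent modulo $\mathcal{V}$, there are unique constants $a,b\in\mathbb{C}$ and $v\in\mathcal{V}$ with $g=af+bh+v$. Put $H:=bf+h$; then $f$ and $H$ are linearly independent modulo $\mathcal{V}$, neither lies in $\mathcal{V}$, and substituting $h=H-bf$ gives
$$\psi(x,y)=f(xy)-(2a-b^{2})f(x)f(y)-f(x)v(y)-v(x)f(y)-H(x)H(y).$$
Set $c^{2}:=b^{2}-2a$. If $c\neq0$, put $F_{0}:=-c^{2}f$, $G_{0}:=cH$; the preceding display becomes
$$F_{0}(xy)-F_{0}(x)F_{0}(y)+G_{0}(x)G_{0}(y)=F_{0}(x)v(y)+v(x)F_{0}(y)-c^{2}\psi(x,y),$$
whose right-hand side lies in $\mathcal{V}$ in $x$ for each $y$. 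So $(F_{0},G_{0})$ solves the cosine equation modulo $\mathcal{V}$; as $F_{0},G_{0}$ are linearly independent modulo $\mathcal{V}$ and $F_{0}\notin\mathcal{V}$, Szekelyhidi's cosine stability \cite{Sz�kelyhidi3} gives an exact solution $(f_{0},g_{0})$ of $f_{0}(xy)=f_{0}(x)f_{0}(y)-g_{0}(x)g_{0}(y)$ with $f_{0}-F_{0},\ g_{0}-G_{0}\in\mathcal{V}$; subtracting this cosine identity from the displayed equation and using once more the independence of $F_{0},G_{0}$ modulo $\mathcal{V}$ forces $g_{0}=G_{0}$ and $f_{0}-F_{0}=v$. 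Putting $\lambda:=c^{-1}$, $\rho:=b/c$, $\varphi:=v$ and unwinding $F_{0}=-c^{2}f$, $H=bf+h$, $g=af+bh+v$ recovers exactly the relations of possibility (1).

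It remains to treat $c=0$. Then the display for $\psi$ reads $x\mapsto f(xy)-f(x)v(y)-v(x)f(y)-H(x)H(y)=\psi(x,y)\in\mathcal{V}$ with $v\in\mathcal{V}$, so Lemma 3.2 (with $v$ and $H$ in the roles of $g$ and $h$) shows $m:=v$ is multiplicative; with $\beta:=b$ we get $g=\tfrac12\beta^{2}f+\beta h+m$ and the equation
$$f(xy)=f(x)m(y)+m(x)f(y)+H(x)H(y)+\psi(x,y).$$
Fixing $z_{0}$ with $H(z_{0})\neq0$ this equation gives $H(x)=H(z_{0})^{-1}\bigl(f(xz_{0})-m(z_{0})f(x)-f(z_{0})m(x)-\psi(x,z_{0})\bigr)$; inserting this (with $xy$ in place of $x$) for $H(xy)$, expanding $f(xyz_{0})=f(x(yz_{0}))$ by the same equation, and eliminating the products of $H$ via that equation and the multiplicativity of $m$ leads — after a long but mechanical computation modelled on the proof of Lemma 3.1 — to the identity for $H(xy)-m(x)H(y)-H(x)m(y)$ asserted in possibility (3), the $L_{j}\in\mathcal{V}$ arising as translates of $m$, $v$ and $\psi(\cdot,z_{0})$ and the $l_{j}$ as right translations by fixed elements; together with the equation for $f$ this is possibility (3). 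In the degenerate situation where $H$ turns out to coincide modulo $\mathcal{V}$ with $\lambda(M-m)$ for some multiplicative $M\notin\mathcal{V}$ and $\lambda\in\mathbb{C}\setminus\{0\}$, substituting $H=\lambda(M-m)$ back into the equation for $f$ produces the relations $\beta f+h=\lambda M-\lambda m$ and the stated identity for $f(xy)-\lambda^{2}M(xy)$, which is possibility (2).

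The conceptual input is light — two uses of the Section 3 lemmas and one appeal to each of Szekelyhidi's sine and cosine stability results — so the main obstacle is purely the volume of computation in the dependent case: the completion of the square and the ensuing eliminations, the associativity computation producing the explicit identity for $H$ when $c=0$ together with the separation of the degenerate case leading to possibility (2), and the verification in the case $c\neq0$ that the cosine-stability output can be normalised so that the entire perturbation is carried by the single function $\varphi=v$.
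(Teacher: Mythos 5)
Your treatment of the linearly independent case (possibility (4)) is correct and is exactly the paper's Case B: Lemma 3.1 plus the commutator hypothesis and independence of $f,h$ modulo $\mathcal{V}$ kill $\varphi_{1},\varphi_{2}$. The dependent case, however, contains a genuine error in the subcase $c^{2}=b^{2}-2a\neq 0$. With your normalisation $F_{0}:=-c^{2}f$, $G_{0}:=cH$ the defect is $F_{0}(x)v(y)+v(x)F_{0}(y)-c^{2}\psi(x,y)$, and for fixed $y$ with $v(y)\neq0$ the term $F_{0}(x)v(y)$ is a nonzero multiple of $F_{0}=-c^{2}f$, which is \emph{not} in $\mathcal{V}$ (independence of $f,h$ modulo $\mathcal{V}$ forces $f\notin\mathcal{V}$). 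So your claim that the right-hand side lies in $\mathcal{V}$ in $x$ is false unless $v=0$, and the appeal to Sz\'ekelyhidi's cosine result has no hypothesis to stand on. Moreover the tool you invoke is misquoted: for a general two-sided invariant $\mathcal{V}$ on a semigroup the relevant statement (Lemma 3.1 of Sz\'ekelyhidi's stability paper, which also requires checking the commutator condition for the new pair) is a superstability statement --- the pair itself satisfies the cosine equation exactly --- not an ``there is a nearby exact solution'' statement; and your subsequent ``normalisation'' argument, if the hypotheses did hold for $(F_{0},G_{0})$, would force $f_{0}-F_{0}=0$, not $f_{0}-F_{0}=v$. The repair is precisely the paper's choice: absorb $v$ from the start, setting $f_{0}:=-c^{2}f+v$ and $g_{0}:=cH$, so the defect becomes $-c^{2}\psi(x,y)+v(xy)-v(x)v(y)$, which does lie in $\mathcal{V}$ in $x$; then verify $x\mapsto f_{0}(xy)-f_{0}(yx)\in\mathcal{V}$ and conclude the exact cosine equation for $(f_{0},g_{0})$, which unwinds to possibility (1).

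In the subcase $b^{2}=2a$ your use of Lemma 3.2 to make $m:=v$ multiplicative is correct, but what follows is a sketch that labels the hard part ``mechanical''. The associativity computation does not lead to the identity in possibility (3); it leads to an identity of the form $H(xy)-H(x)m(y)-m(x)H(y)=H(x)k(y)+\Phi(x,y)$ with $\Phi(\cdot,y)\in\mathcal{V}$ and an \emph{unknown} function $k$, and eliminating the term $H(x)k(y)$ is the whole point. One must show $k=\gamma H+\Phi_{1}$ with $\gamma$ constant and $\Phi_{1}\in\mathcal{V}$, then note that $x\mapsto H(xy)-H(x)[m(y)+\Phi_{1}(y)+\gamma H(y)]$ lies in $\mathcal{V}$ and, since $H\notin\mathcal{V}$, apply the Baker--Lawrence--Zorzitto type theorem of Sz\'ekelyhidi to obtain a multiplicative $M=m+\Phi_{1}+\gamma H$; the dichotomy $\gamma\neq0$ versus $\gamma=0$ is what separates possibility (2) from possibility (3), and in both branches one still has to prove $\Phi_{1}=0$ exactly (using $H\notin\mathcal{V}$ and, for $\gamma\neq0$, the second hypothesis $x\mapsto f(xy)-f(yx)\in\mathcal{V}$ through the symmetry of the multiplicative functions). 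Your proposal mentions neither the auxiliary multiplicative function $M$ nor why the coefficient of $H(x)H(y)$ is a constant, and your ``degenerate situation where $H$ coincides modulo $\mathcal{V}$ with $\lambda(M-m)$'' would only give the relations of (2) up to an element of $\mathcal{V}$, whereas the statement requires $\beta f+h=\lambda M-\lambda m$ on the nose. As it stands, the dependent case of your argument has one step that fails and one step that is asserted rather than proved.
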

\begin{proof} We split the discussion into the cases $f,g,h$ are
linearly dependent modulo $\mathcal{V}$ and $f,g,h$ are linearly
independent modulo $\mathcal{V}$.\\
\underline{Case A}: $f,g,h$ are linearly dependent modulo
$\mathcal{V}$. Since $f$ and $h$ are linearly independent modulo
$\mathcal{V}$ we get that there exist a function
$\varphi\in\mathcal{V}$ and two constants
$\alpha,\beta\in\mathbb{C}$ such that
\begin{equation}\label{eq:eq20}g=\alpha\,f+\beta\,h+\varphi.\end{equation}
By substituting (\ref{eq:eq20}) in (\ref{eq:eq5}) we obtain
\begin{equation*}\begin{split}\psi(x,y)&=f(xy)-f(x)[\alpha\,f(y)+\beta\,h(y)+\varphi(y)]-[\alpha\,f(x)+\beta\,h(x)+\varphi(x)]f(y)\\
&-h(x)h(y)\\
&=f(xy)-2\,\alpha\,f(x)f(y)-f(x)\varphi(y)-\varphi(x)f(y)-\beta\,f(x)h(y)-\beta\,h(x)f(y)\\
&-h(x)h(y),\end{split}\end{equation*} for all $x,y\in G$, which
implies that
\begin{equation}\label{eq:eq21}\begin{split}\psi(x,y)&=f(xy)-(2\,\alpha\,-\beta^{2})f(x)f(y)-f(x)\varphi(y)-\varphi(x)f(y)\\
&-[\beta\,f(x)+h(x)][\beta\,f(y)+h(y)]\end{split}\end{equation}
for all $x,y\in G$. We have the following subcases\\
\underline{Subcase A.1}: $2\,\alpha\neq\beta^{2}$. Let $x,y\in G$ be
arbitrary and let $\delta\in\mathbb{C}\setminus\{0\}$ such that
\begin{equation}\label{eq:eq22}\delta^{2}=-(2\,\alpha\,-\beta^{2}).\end{equation}
Multiplying both sides of (\ref{eq:eq21}) by $-\delta^{2}$ and then
adding $\varphi(xy)-\varphi(x)\varphi(y)$ to both sides of the
identity obtained we derive
\begin{equation*}\begin{split}&-\delta^{2}\,\psi(x,y)+\varphi(xy)-\varphi(x)\varphi(y)=-\delta^{2}\,f(xy)+\varphi(xy)-[\delta^{4}\,f(x)f(y)\\
&-\delta^{2}\,f(x)\varphi(y)-\delta^{2}\,\varphi(x)f(y)+\varphi(x)\varphi(y)]+\delta^{2}\,[\beta\,f(x)+h(x)][\beta\,f(y)+h(y)].\end{split}\end{equation*}
So, $x$ and $y$ being arbitrary, we get from the identity above that
\begin{equation}\label{eq:eq23}-\delta^{2}\,\psi(x,y)+\varphi(xy)-\varphi(x)\varphi(y)=f_{0}(xy)-f_{0}(x)f_{0}(y)+g_{0}(x)g_{0}(y),\end{equation}
for all $x,y\in G$, where
\begin{equation}\label{eq:eq24}f_{0}:=-\delta^{2}\,f+\varphi\end{equation}
and
\begin{equation}\label{eq:eq25}g_{0}:=\delta\,(\beta\,f+h).\end{equation}
Notice that $f_{0}$ and $g_{0}$ are linearly independent modulo
$\mathcal{V}$ because $f$ and $h$ are.\\
Now, let $y$ be arbitrary. As $\varphi\in\mathcal{V}$ the function
$x\mapsto\varphi(x)\varphi(y)$ belongs to $\mathcal{V}$, and since
the linear space $\mathcal{V}$ is two-sided invariant, we get that
the function $x\mapsto\varphi(xy)$ belongs to $\mathcal{V}$.
Moreover, by assumption the function $x\mapsto \psi(x,y)$ belongs to
$\mathcal{V}$. Hence the left hand side of the identity
(\ref{eq:eq23}) belongs to $\mathcal{V}$ as a function in $x$. So
that the function
$$x\mapsto f_{0}(xy)-f_{0}(x)f_{0}(y)+g_{0}(x)g_{0}(y)$$
belongs to $\mathcal{V}$. On the other hand, by using
(\ref{eq:eq24}), we have
$$f_{0}(xy)-f_{0}(yx)=-\delta^{2}\,(f(xy)-f(yx))+\varphi(xy)-\varphi(yx)$$ for all $x\in G$.
So, $y$ being arbitrary, the function $x\mapsto f_{0}(xy)-f_{0}(yx)$
belongs to $\mathcal{V}$ for all $y\in G$ because the functions
$x\mapsto f(xy)-f(yx)$ and $x\mapsto \varphi(xy)-\varphi(yx)$ do.
Moreover $f_{0}$ and $g_{0}$ are linearly independent modulo
$\mathcal{V}$. Hence we get, according to \cite[Lemma
3.1]{Székelyhidi3}, that
$$f_{0}(xy)=f_{0}(x)f_{0}(y)-g_{0}(x)g_{0}(y)$$ for all $x,y\in G$.
By putting $\lambda=\dfrac{1}{\delta}$ we get, from (\ref{eq:eq24}),
that
\begin{equation}\label{eq:eq26}f=-\lambda^{2}\,f_{0}+\lambda^{2}\varphi.\end{equation}
By putting $\rho=\beta\lambda$ we get, from (\ref{eq:eq25}), that
$h=\lambda\,g_{0}-\beta\,(-\lambda^{2}\,f_{0}+\lambda^{2}\varphi)$,
which implies that
\begin{equation}\label{eq:eq27}h=\lambda\rho\,f_{0}+\lambda\,g_{0}-\lambda\rho\,\varphi.\end{equation}
So, we derive from (\ref{eq:eq20}), (\ref{eq:eq26}) and
(\ref{eq:eq27}) that
\begin{equation*}\begin{split}g&=\alpha\,(-\lambda^{2}\,f_{0}+\lambda^{2}\,\varphi)+\beta\,(\lambda\rho\,f_{0}+\lambda\,g_{0}-\lambda\rho\,\varphi)+\varphi\\
&=(-\alpha\lambda^{2}+\beta\lambda\rho)\,f_{0}+\beta\lambda\,g_{0}+(\alpha\lambda^{2}-\beta\lambda\rho+1)\,\varphi\\
&=(-\alpha\lambda^{2}+\rho^{2})\,f_{0}+\rho\,g_{0}+(\alpha\lambda^{2}-\rho^{2}+1)\,\varphi\end{split}\end{equation*}
Using (\ref{eq:eq22}) we find, by elementary computations, that
$\alpha\lambda^{2}=\frac{1}{2}\,\rho^{2}-\frac{1}{2}$. Hence, from
the identity above, we get that
\begin{equation*}g=\frac{1+\rho^{2}}{2}\,f_{0}+\rho\,g_{0}+\frac{1-\rho^{2}}{2}\,\varphi.\end{equation*}
The result obtained in this case occurs in (1) of Lemma 3.4.\\
\underline{Subcase A.2}: $2\,\alpha=\beta^{2}$. In this case the
identity (\ref{eq:eq21}) becomes
\begin{equation}\label{eq:eq28}\psi(x,y)=f(xy)-f(x)\varphi(y)-\varphi(x)f(y)-H(x)H(y)\end{equation}
for all $x,y\in G$, where
\begin{equation}\label{eq:eq29}H:=\beta\,f+h.\end{equation}
Since $f$ and $h$ are linearly independent modulo $\mathcal{V}$ so
are $f$ and $H$. Moreover $\varphi\in\mathcal{V}$. Hence, according
to Lemma 3.2, there exists a multiplicative function
$m:G\rightarrow\mathbb{C}$ in $\mathcal{V}$ such that $\varphi=m$.
So the identities (\ref{eq:eq20}) and (\ref{eq:eq28}) become
respectively
\begin{equation}\label{eq:eq30}g=\frac{1}{2}\beta^{2}\,f+\beta\,h+m.\end{equation}
and
\begin{equation}\label{eq:eq31}\psi(x,y)=f(xy)-f(x)m(y)-m(x)f(y)-H(x)H(y)\end{equation}
for all $x,y\in G$. We use similar computations to the ones in the
proof of \cite[Theorem]{Chung et al.}. Let $x,y,z\in G$ be
arbitrary. First we compute $f(xyz)$ as $f(x(yz))$ and then as
$f((xy)z)$. From (\ref{eq:eq31}) we get that
\begin{equation*}\begin{split}f(x(yz))&=f(x)m(yz)+m(x)f(yz)+H(x)H(yz)+\psi(x,yz)\\
&=f(x)m(yz)+m(x)[f(y)m(z)+m(y)f(z)+H(y)H(z)+\psi(y,z)]\\
&+H(x)H(yz)+\psi(x,yz),\end{split}\end{equation*} so that
\begin{equation}\label{eq:eq32}\begin{split}f(x(yz))&=f(x)m(yz)+m(xz)f(y)+m(xy)f(z)+m(x)H(y)H(z)\\
&+m(x)\psi(y,z)+H(x)H(yz)+\psi(x,yz).\end{split}\end{equation} On
the other hand
\begin{equation*}\begin{split}f((xy)z)&=f(xy)m(z)+m(xy)f(z)+H(xy)H(z)+\psi(xy,z)\\
&=[f(x)m(y)+m(x)f(y)+H(x)H(y)+\psi(x,y)]m(z)+m(xy)f(z)\\
&+H(xy)H(z)+\psi(xy,z),\end{split}\end{equation*} hence
\begin{equation}\label{eq:eq33}\begin{split}f((xy)z)&=f(x)m(yz)+m(xz)f(y)+m(xy)f(z)+H(x)H(y)m(z)\\
&+m(z)\psi(x,y)+H(xy)H(z)+\psi(xy,z).\end{split}\end{equation} From
(\ref{eq:eq32}) and (\ref{eq:eq33}) we get that
\begin{equation}\label{eq:eq34}\begin{split}&H(x)[H(yz)-H(y)m(z)-m(y)H(z)]-H(z)[H(xy)-m(x)H(y)-H(x)m(y)]\\
&=m(z)\psi(x,y)-m(x)\psi(y,z)+\psi(xy,z)-\psi(x,yz),\end{split}\end{equation}
for all $x,y,z\in G$. Since $f$ and $H$ are linearly independent
modulo $\mathcal{V}$ they are, in particular, linearly independent.
So, there exist $z_{1},z_{2}\in G$ such that
\begin{equation}\label{eq:eq35}f(z_{1})H(z_{2})-f(z_{2})H(z_{1})\neq0.\end{equation}
Let $x,y\in G$ be arbitrary. By putting $z=z_{1}$ and then $z=z_{2}$
in (\ref{eq:eq34}) we get respectively
\begin{equation}\label{eq:eq36}H(x)k_{i}(y)-H(z_{i})[H(xy)-H(x)m(y)-m(x)H(y)]=\psi_{i}(x,y)\end{equation}
where
$$k_{i}(y):=H(yz_{i})-H(y)m(z_{i})-m(y)H(z_{i})$$
and
\begin{equation}\label{eq:eq37}\psi_{i}(x,y):=m(z_{i})\psi(x,y)-m(x)\psi(y,z_{i})-\psi(x,yz_{i})+\psi(xy,z_{i})\end{equation}
for $i=1,2$. Multiplying both sides of (\ref{eq:eq36}) by $f(z_{2})$
for $i=1$ and by $f(z_{1})$ for $i=2$, and subtracting the
identities obtained we get that
\begin{equation}\label{eq:eq38}H(x)k_{3}(y)+[f(z_{1})H(z_{2})-f(z_{2})H(z_{1})][H(xy)-H(x)m(y)-m(x)H(y)]=\psi_{3}(x,y),\end{equation}
where
$$k_{3}(y):=f(z_{2})k_{1}(y)-f(z_{1})k_{2}(y)$$
and
\begin{equation}\label{eq:eq39}\psi_{3}(x,y):=f(z_{2})\psi_{1}(x,y)-f(z_{1})\psi_{2}(x,y).\end{equation}
So, $x$ and $y$ being arbitrary, we get, taking (\ref{eq:eq35}) and
(\ref{eq:eq38}) into account, that
\begin{equation}\label{eq:eq40}H(xy)-H(x)m(y)-m(x)H(y)=H(x)k(y)+\Phi(x,y)\end{equation}
for all $x,y\in G$, where
$$k(x):=-[f(z_{1})H(z_{2})-f(z_{2})H(z_{1})]^{-1}k_{3}(x)$$
and
\begin{equation}\label{eq:eq41}\Phi(x,y):=[f(z_{1})H(z_{2})-f(z_{2})H(z_{1})]^{-1}\psi_{3}(x,y)\end{equation}
for all $x,y\in G.$ Substituting (\ref{eq:eq40}) into
(\ref{eq:eq34}) we get that
\begin{equation*}\begin{split}&H(x)[H(y)k(z)+\Phi(y,z)]-H(z)[H(x)k(y)+\Phi(x,y)]\\
&=m(z)\psi(x,y)-m(x)\psi(y,z)+\psi(xy,z)-\psi(x,yz),\end{split}\end{equation*}
which implies that
\begin{equation}\label{eq:eq42}\begin{split}H(x)[H(y)k(z)-H(z)k(y)+\Phi(y,z)]&=H(z)\Phi(x,y)+m(z)\psi(x,y)\\
&-m(x)\psi(y,z)+\psi(xy,z)-\psi(x,yz)\end{split}\end{equation} for
all $x,y,z\in G$. Now let $y,z\in G$ be arbitrary. Since
$\mathcal{V}$ is a two-sided invariant linear space of
complex-valued functions on $G$, and the functions $x\mapsto m(x)$
and $x\mapsto \psi(x,y)$ belong to $\mathcal{V}$, we deduce from
(\ref{eq:eq37}), (\ref{eq:eq39}) and (\ref{eq:eq41}) that the
functions $x\mapsto\Phi(x,y)$ and $x\mapsto \psi_{i}(x,y)$ belong to
$\mathcal{V}$ for $i=1,2,3$. Hence the right hand side of
(\ref{eq:eq42}) belongs to $\mathcal{V}$ as a function in $x$. It
follows that the left hand side of (\ref{eq:eq42}) belongs to
$\mathcal{V}$ as a function in $x$. As $f$ and $H$ are linearly
independent modulo $\mathcal{V}$, we derive, from (\ref{eq:eq42}),
that $H(y)k(z)-H(z)k(y)+\Phi(y,z)=0$. So, $y$ and $z$ being
arbitrary, we get that
\begin{equation}\label{eq:eq43}H(z)k(x)=H(x)k(z)+\Phi(x,z)\end{equation}
for all $x,z\in G.$\\
On the other hand we deduce from (\ref{eq:eq35}) that
$f(z_{1})H(z_{2})\neq0$ or $f(z_{2})H(z_{1})\neq0$, so we can
assume, without loss of generality, that $H(z_{1})\neq0$. Replacing
$z$ by $z_{1}$ in the identity (\ref{eq:eq43}) we derive that
\begin{equation}\label{eq:eq44}k(x)=\gamma\,H(x)+\Phi_{1}(x)\end{equation}
for all $x\in G$, where $\gamma:=H(z_{1})^{-1}k(z_{1})$ and
\begin{equation}\label{eq:eq45}\Phi_{1}(x):=H(z_{1})^{-1}\Phi(x,z_{1})\end{equation}
for all $x\in G$. From (\ref{eq:eq40}) and (\ref{eq:eq44}) we get
that
\begin{equation}\label{eq:eq46}H(xy)=H(x)m(y)+m(x)H(y)+\gamma\,H(x)H(y)+H(x)\Phi_{1}(y)+\Phi(x,y)\end{equation}
for all $x,y\in G$. Since the functions $m$ and $x\mapsto \Phi(x,y)$
belongs to $\mathcal{V}$ for all $y\in G$ we get, from
(\ref{eq:eq46}), that the function
\begin{equation}\label{eq:eq47}x\mapsto
H(xy)-H(x)[m(y)+\Phi_{1}(y)+\gamma\,H(y)]\end{equation} belongs to
$\mathcal{V}$ for all $y\in G$. As $H\not\in\mathcal{V} $ we get
from (\ref{eq:eq47}), according to \cite[Theorem]{Székelyhidi2},
that there exists a multiplicative function
$M:G\rightarrow\mathbb{C}$ such that
\begin{equation}\label{eq:eq48}m+\Phi_{1}+\gamma\,H=M.\end{equation}
We have the following subcases\\
\underline{Case A.2.1}: $\gamma\neq0$. Putting
$\lambda=\dfrac{1}{\gamma}\in\mathbb{C}\setminus\{0\}$ we obtain
from (\ref{eq:eq48}) the identity
\begin{equation}\label{eq:eq49}H=\lambda\,M-\lambda\,m-\lambda\,\Phi_{1}.\end{equation}
Let $x,y\in G$ be arbitrary. Since $m$ and $M$ are multiplicative we
get from the identity above that
$H(xy)-H(yx)=\lambda\,\Phi_{1}(yx)-\lambda\,\Phi_{1}(xy)$. Taking
(\ref{eq:eq46}) into account we get that
$H(x)\Phi_{1}(y)-H(y)\Phi_{1}(x)+\Phi(x,y)-\Phi(y,x)=\lambda\,\Phi_{1}(yx)-\lambda\,\Phi_{1}(xy)$.
So, $x$ and $y$ being arbitrary, we obtain
\begin{equation}\label{eq:eq50}H(x)\Phi_{1}(y)=H(y)\Phi_{1}(x)+\Phi(y,x)-\Phi(x,y)+\lambda\,\Phi_{1}(yx)-\lambda\,\Phi_{1}(xy)\end{equation}
for all $x,y\in G$. Now let $y$ be arbitrary. As seen early the
functions $\Phi_{1}$ and $x\mapsto\Phi(x,y)-\Phi(y,x)$ belong to
$\mathcal{V}$. So, $\mathcal{V}$ being a tow-sided invariant linear
space of complex-valued functions on $G$, we get from
(\ref{eq:eq50}) that the function $x\mapsto H(x)\Phi_{1}(y)$ belongs
to $\mathcal{V}$. Taking into account that $f$ and $H$ are linearly
independent, we get $\Phi_{1}(y)=0$. So, $y$ being arbitrary, we
obtain $\Phi_{1}=0$. Hence, using (\ref{eq:eq49}), we get that
\begin{equation}\label{eq:eq51}H=\lambda\,M-\lambda\,m.\end{equation}
Substituting this back into (\ref{eq:eq31}) we get, by an elementary
computation, that
\begin{equation}\label{eq:eq52}\begin{split}f(xy)-\lambda^{2}\,M(xy)&=(f(x)-\lambda^{2}\,M(x))m(y)+m(x)(f(y)-\lambda^{2}\,M(y))\\
&+\lambda^{2}\,m(xy)+\psi(x,y),\end{split}\end{equation} for all
$x,y\in G$.
We conclude from (\ref{eq:eq29}), (\ref{eq:eq30}), (\ref{eq:eq51}) and(\ref{eq:eq52}) that the result (2) of Lemma 3.4 holds.\\
\underline{Case A.2.2}: $\gamma=0$. Let $y\in G$ be arbitrary. The
identity (\ref{eq:eq44}) implies that $k=\Phi_{1}$. Hence we derive
from (\ref{eq:eq43}) that
$$H(x)\Phi_{1}(y)=H(y)\Phi_{1}(x)-\Phi(x,y),$$ for all $x\in G$.
Since the function $x\mapsto\Phi(x,y)$ belongs to $\mathcal{V}$ we
get, taking the identity above and (\ref{eq:eq45}) into account,
that the function $x\mapsto H(x)\Phi_{1}(y)$ belongs to
$\mathcal{V}$. As $f$ and $H$ are linearly independent modulo
$\mathcal{V}$ we infer that $\Phi_{1}(y)=0$. So, $y$ being
arbitrary, we get that $\Phi_{1}=0$. Hence the identity
(\ref{eq:eq46}) becomes
\begin{equation}\label{eq:eq53}H(xy)=m(x)H(y)+H(x)m(y)+\Phi(x,y).\end{equation} On the other hand, by using
(\ref{eq:eq37}), (\ref{eq:eq39}) and (\ref{eq:eq41}) we derive,
using the same notations, that there exist $\eta_{i}\in\mathbb{C}$
with
$i=1,\cdot\cdot\cdot,7$ such that\\
$\Phi(x,y)=\eta_{1}\,\psi(x,y)+\eta_{2}\,m(x)\psi(y,z_{1})+\eta_{3}\,m(x)\psi(y,z_{2})+\eta_{4}\,\psi(x,yz_{1})+\eta_{5}\,\psi(x,yz_{2})$
$+\eta_{6}\,\psi(xy,z_{1})+\eta_{7}\,\psi(xy,z_{2})\quad\quad\quad\quad\quad\quad\quad\quad\quad\quad\quad\quad\quad\quad$\\
 $x,y\in G$. We get that
\begin{equation}\label{eq:eq54}\begin{split}\Phi(x,y)&=\eta_{1}\,\psi(x,y)+\eta_{2}\,m(x)L_{1}(y)+\eta_{3}\,m(x)L_{2}(y)+\eta_{4}\,\psi(x,l_{1}(y))\\
&+\eta_{5}\,\psi(x,l_{2}(y))+\eta_{6}\,L_{1}(xy)+\eta_{7}\,L_{2}(xy)\end{split}\end{equation}
for all $x,y\in G$, where
$$L_{i}(x):=\psi(x,z_{i})$$ for $i=1,2$ and for all $x\in G$, and
$l_{i}:G\rightarrow G$ is defined for $i=1,2$ by $l_{i}(x)=xz_{i}$
for all $x\in G$. Hence we get from (\ref{eq:eq53}) and
(\ref{eq:eq50}) the identity
\begin{equation}\label{eq:eq55}\begin{split}&H(xy)-m(x)H(y)-H(x)m(y)=\eta_{1}\,\psi(x,y)+\eta_{2}\,m(x)L_{1}(y)+\eta_{3}\,m(x)L_{2}(y)\\
&+\eta_{4}\,\psi(x,l_{1}(y))+\eta_{5}\,\psi(x,l_{2}(y))+\eta_{6}\,L_{1}(xy)+\eta_{7}\,L_{2}(xy)\end{split}\end{equation}
for all $x,y\in G$.\\ We conclude from (\ref{eq:eq29}),
(\ref{eq:eq30}),
(\ref{eq:eq31}) and (\ref{eq:eq55}) that the result (3) of Lemma 3.4 holds.\\
\underline{Case B}: $f,g$ and $h$ are linearly independent modulo
$\mathcal{V}$. Then, according to Lemma 3.1, there exist two
functions $\varphi_{1},\varphi_{2}\in\mathcal{V}$ satisfying
(\ref{eq:eq4}), where $\psi$ is the function defined in
(\ref{eq:eq5}). Let $y\in G$ be arbitrary. Since the functions
$x\mapsto\psi(x,y)$ and $x\mapsto f(xy)-f(yx)$ belong to
$\mathcal{V}$ by assumption, so does the function
$x\mapsto\psi(y,x)$. Seeing that
$\psi(y,x)=\varphi_{1}(y)f(x)+\varphi_{2}(y)h(x)$, and that $f$ and
$h$ are linearly independent modulo $\mathcal{V}$, we get that
$\varphi_{1}(y)=\varphi_{2}(y)=0$. So, $y$ being arbitrary, we
deduce that $\psi(x,y)=0$ for all $x,y\in G$. Then the result (4) of
Lemma 3.4 holds. This completes the proof of Lemma 3.4.\end{proof}
\section{Stability of equation (\ref{eq:eq1}) on amenable groups}
Throughout this section $G$ is an amenable group with an identity
element that we denote $e$. We will extend the Sz\'{e}kelyhidi's
results \cite[Theorem 2.3]{Székelyhidi3}, about the stability of the
functional equation (\ref{eq:eq2}), to the functional equation
(\ref{eq:eq1}).
\begin{thm} Let $f,g,h:G\rightarrow\mathbb{C}$ be functions. The
function
\begin{equation*}(x,y)\mapsto f(xy)-f(x)g(y)-g(x)f(y)-h(x)h(y)\end{equation*}
is bounded if and only if one of the following assertions holds:\\
(1) $f=0$, $g$ is arbitrary and $h\in\mathcal{B}(G)$;\\
(2) $f,g,h\in\mathcal{B}(G)$;\\
(3)\[ \left\{
\begin{array}{r c l}
f&=&a\,m+\varphi,\quad\quad\quad\quad\quad\quad\quad\quad\quad\quad\quad\quad\quad\quad\quad\quad\quad\quad\quad\quad\quad\quad\quad\quad\quad\\
g&=&(1-\frac{\lambda^{2}}{2}\,a)m-\lambda\,b-\frac{\lambda^{2}}{2}\,\varphi,\quad\quad\quad\quad\quad\quad\quad\quad\quad\quad\quad\quad\quad\quad\quad\quad\quad\quad\\
h&=&\lambda\,a\,m+b+\lambda\,\varphi,\quad\quad\quad\quad\quad\quad\quad\quad\quad\quad\quad\quad\quad\quad\quad\quad\quad\quad\quad\quad\quad\quad
\end{array}
\right.
\]
where $\lambda\in\mathbb{C}$ is a constant,
$a:G\rightarrow\mathbb{C}$ is an additive function,
$m:G\rightarrow\mathbb{C}$ is a bounded multiplicative function and
$b,\varphi:G\rightarrow\mathbb{C}$ are two bounded functions;\\
(4)\[ \left\{
\begin{array}{r c l}
f&=&\alpha\,m-\alpha\,b,\quad\quad\quad\quad\quad\quad\quad\quad\quad\quad\quad\quad\quad\quad\quad\quad\quad\quad\quad\quad\quad\quad\quad\quad\\
g&=&\frac{1-\alpha\lambda^{2}}{2}\,m+\frac{1+\alpha\lambda^{2}}{2}\,b-\lambda\,\varphi,\quad\quad\quad\quad\quad\quad\quad\quad\quad\quad\quad\quad\quad\quad\quad\quad\\
h&=&\alpha\lambda\,m-\alpha\lambda\,b+\varphi,\quad\quad\quad\quad\quad\quad\quad\quad\quad\quad\quad\quad\quad\quad\quad\quad\quad\quad\quad\quad\quad
\end{array}
\right.
\]
where $\alpha,\lambda\in\mathbb{C}$ are two constants,
$m:G\rightarrow\mathbb{C}$ is a multiplicative function and
$b,\varphi:G\rightarrow\mathbb{C}$ are two bounded functions;\\
(5)\[ \left\{
\begin{array}{r c l}
f&=&f_{0},\quad\quad\quad\quad\quad\quad\quad\quad\quad\quad\quad\quad\quad\quad\quad\quad\quad\quad\quad\quad\quad\quad\quad\quad\quad\quad\\
g&=&g=g_{0}-\frac{\lambda^{2}}{2}\,f_{0}-\lambda\,b,\quad\quad\quad\quad\quad\quad\quad\quad\quad\quad\quad\quad\quad\quad\quad\quad\quad\quad\quad\\
h&=&\lambda\,f_{0}+b,\quad\quad\quad\quad\quad\quad\quad\quad\quad\quad\quad\quad\quad\quad\quad\quad\quad\quad\quad\quad\quad\quad\quad
\end{array}
\right.
\]
where $\lambda\in\mathbb{C}$ is a constant,
$b:G\rightarrow\mathbb{C}$ is a bounded function and
$f_{0},g_{0}:G\rightarrow\mathbb{C}$ are functions satisfying the
sine functional equation
$$f_{0}(xy)=f_{0}(x)g_{0}(y)+g_{0}(x)f_{0}(y),\,\,x,y\in G;$$\\
(6)\[ \left\{
\begin{array}{r c l}
f&=&-\lambda^{2}\,f_{0}+\lambda^{2}\,b,\quad\quad\quad\quad\quad\quad\quad\quad\quad\quad\quad\quad\quad\quad\quad\quad\quad\quad\quad\quad\quad\quad\\
g&=&\frac{1+\rho^{2}}{2}\,f_{0}+\rho\,g_{0}+\frac{1-\rho^{2}}{2}\,b,\quad\quad\quad\quad\quad\quad\quad\quad\quad\quad\quad\quad\quad\quad\quad\\
h&=&\lambda\rho\,f_{0}+\lambda\,g_{0}-\lambda\rho\,b,\quad\quad\quad\quad\quad\quad\quad\quad\quad\quad\quad\quad\quad\quad\quad\quad\quad\quad\quad
\end{array}
\right.
\]
where $\rho\in\mathbb{C},\,\lambda\in\mathbb{C}\setminus\{0\}$ are
two constants, $b:G\rightarrow\mathbb{C}$ is a bounded function and
$f_{0},g_{0}:G\rightarrow\mathbb{C}$ are functions satisfying the
cosine functional equation
$$f_{0}(xy)=f_{0}(x)f_{0}(y)-g_{0}(x)g_{0}(y),\,\,x,y\in G;$$
(7)\[ \left\{
\begin{array}{r c l}
f&=&\lambda^{2}\,M+am+b,\quad\quad\quad\quad\quad\quad\quad\quad\quad\quad\quad\quad\quad\quad\\
g&=&\beta\lambda(1-\frac{1}{2}\beta\lambda)M+(1-\beta\lambda)m-\frac{1}{2}\beta^{2}\,a\,m-\frac{1}{2}\beta^{2}\,b,\quad\quad\quad\quad\quad\quad\quad\\
h&=&\lambda(1-\beta\lambda)M-\lambda\,m-\beta\,a\,m-\beta\,b,\quad\quad\quad\quad\quad\quad\quad\quad\quad\quad\quad\quad
\end{array}
\right.
\]
where $\beta\in\mathbb{C},\lambda\in\mathbb{C}\setminus\{0\}$ are
tow constants, $m,M:G\rightarrow\mathbb{G}$ are two multiplicative
functions such that $m$ is bounded, $a:G\rightarrow\mathbb{C}$ is an
additive function and
$b:G\rightarrow\mathbb{C}$ is a bounded function;\\
(8)\[ \left\{
\begin{array}{r c l}
f&=&\frac{1}{2}a^{2}\,m+\frac{1}{2}a_{1}\,m+b,\quad\quad\quad\quad\quad\quad\quad\quad\quad\quad\quad\quad\quad\quad\quad\\
g&=&-\frac{1}{4}\beta^{2}\,a^{2}\,m+\beta\,a\,m-\frac{1}{4}\beta^{2}\,a_{1}\,m+m-\frac{1}{2}\beta^{2}\,b,\quad\quad\quad\quad\quad\quad\quad\quad\\
h&=&-\frac{1}{2}\beta\,a^{2}\,m+a\,m-\frac{1}{2}\beta\,a_{1}\,m-\beta\,b,\quad\quad\quad\quad\quad\quad\quad\quad\quad\quad\quad\quad\quad
\end{array}
\right.
\]
where $\beta\in\mathbb{C}$ is a constant, $m:G\rightarrow\mathbb{C}$
is a nonzero bounded multiplicative function,
$a,a_{1}:G\rightarrow\mathbb{C}$ are two additive functions such
that $a\neq0$ and
$b:G\rightarrow\mathbb{C}$ is a bounded function;\\
(9) $g=-\frac{1}{2}\beta^{2}\,f+(1+\beta\,a)m+\beta\,b$ and
$h=-\beta\,f+a\,m+b$, where $\beta\in\mathbb{C}$ is a constant and
$a:G\rightarrow\mathbb{C}$ is an additive function,
$m:G\rightarrow\mathbb{C}$ is a nonzero bounded multiplicative
function and $b:G\rightarrow\mathbb{C}$ is a bounded function such
that the function \begin{equation*}\begin{split}&(x,y)\mapsto
f(xy)m((xy)^{-1})-\frac{1}{2}a^{2}(xy)-(f(x)m(x^{-1})-\frac{1}{2}a^{2}(x))\\
&-(f(y)m(y^{-1})-\frac{1}{2}a^{2}(y))-a(x)b(y)m(y^{-1})-a(y)b(x)m(x^{-1})\end{split}\end{equation*}
is bounded;\\
(10) $f(xy)=f(x)g(y)+g(x)f(y)+h(x)h(y)$ for all $x,y\in G$.
\end{thm}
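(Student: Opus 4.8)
The plan is to feed the hypothesis into the structural lemmas of Section~3 with $\mathcal{V}=\mathcal{B}(G)$, which is two-sided invariant, and then sharpen each resulting possibility using two facts available on an amenable group: (a) a non-zero bounded multiplicative function $m$ satisfies $|m|\equiv 1$, hence $m$ never vanishes and $m(x^{-1})=m(x)^{-1}$ (because $|m(x^{n})|=|m(x)|^{n}$ must stay bounded for every $n\in\mathbb{Z}$); and (b) the Hyers--Ulam property of amenable groups: if $a\colon G\to\mathbb{C}$ has $(x,y)\mapsto a(xy)-a(x)-a(y)$ bounded, then $a=a_{0}+b_{0}$ with $a_{0}$ additive and $b_{0}$ bounded. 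The ``if'' direction is then a direct substitution: for each of the families (1)--(10) the expression $f(xy)-f(x)g(y)-g(x)f(y)-h(x)h(y)$ collapses, after using $m(xy)=m(x)m(y)$ and the additivity of $a$, to a finite sum of products of bounded functions (for (5), (6), (10) one also uses that $f_{0},g_{0}$ exactly solve the sine, cosine, resp. the original equation), hence is bounded.

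For the converse, set $\psi(x,y):=f(xy)-f(x)g(y)-g(x)f(y)-h(x)h(y)$ and assume $\psi$ is bounded. Since the part subtracted from $f(xy)$ is symmetric in $x$ and $y$, we have $f(xy)-f(yx)=\psi(x,y)-\psi(y,x)$, so for every fixed $y$ both $x\mapsto\psi(x,y)$ and $x\mapsto f(xy)-f(yx)$ lie in $\mathcal{B}(G)$. I would then split according to whether $f$ and $h$ are linearly independent modulo $\mathcal{B}(G)$. If they are linearly dependent modulo $\mathcal{B}(G)$, Lemma~3.3 gives five possibilities; its cases (1), (2), (4), (5) are verbatim the assertions (1), (2), (4), (5) of the theorem (with the bounded functions $\varphi,b$ furnished by the lemma). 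In case (3) of Lemma~3.3 one has $h=\lambda f+\varphi$ with $\varphi\in\mathcal{B}(G)$ and $g+\frac{\lambda^{2}}{2}f+\lambda\varphi=m$ a bounded multiplicative function, so $(x,y)\mapsto f(xy)-f(x)m(y)-m(x)f(y)$ is bounded; multiplying by $m((xy)^{-1})$ and using $|m|\equiv1$, the function $x\mapsto f(x)m(x^{-1})$ has bounded Cauchy difference, hence equals $a+(\text{bounded})$ with $a$ additive by (b), and back-substitution yields assertion (3) with this $a$.

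If $f$ and $h$ are linearly independent modulo $\mathcal{B}(G)$, Lemma~3.4 applies, since both of its boundedness hypotheses are now available. Its case (1) is assertion (6) and its case (4) is assertion (10). In case (2), putting $F:=f-\lambda^{2}M$ the displayed identity shows $(x,y)\mapsto F(xy)-F(x)m(y)-m(x)F(y)$ is bounded, so $F=am+(\text{bounded})$ with $a$ additive exactly as above, and combining with $g=\frac{1}{2}\beta^{2}f+\beta h+m$ and $\beta f+h=\lambda M-\lambda m$ gives assertion (7) after elementary rearrangement. The delicate case is case (3) of Lemma~3.4: there $H:=\beta f+h$ satisfies $(x,y)\mapsto H(xy)-m(x)H(y)-H(x)m(y)\in\mathcal{B}(G)$, so $H=am+\varphi$ with $a$ additive and $\varphi$ bounded; moreover the identity $f(xy)-f(x)m(y)-m(x)f(y)=H(x)H(y)+\psi(x,y)$, after multiplication by $m((xy)^{-1})$ and subtraction of $\frac{1}{2}a^{2}$, shows that $P:=f\cdot m^{-1}-\frac{1}{2}a^{2}$ has Cauchy difference equal to $a(x)(\varphi m^{-1})(y)+a(y)(\varphi m^{-1})(x)$ up to a bounded term. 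If $a\ne 0$ and $\varphi=0$ (after dividing out $m$), this Cauchy difference is bounded, hence $P=a_{1}+(\text{bounded})$ with $a_{1}$ additive and $f,g,h$ are pinned down explicitly, which is assertion (8); otherwise one records, taking $b:=\varphi$, that the complicated function displayed in assertion (9) is precisely that residual Cauchy difference minus its two rank-one pieces, hence bounded. I expect this case (3) to be the main obstacle: one must correctly track $H=am+\varphi$ back through $g=\frac{1}{2}\beta^{2}f+\beta h+m$ and $h=H-\beta f$, and recognize exactly when the quadratic correction $\frac{1}{2}a^{2}$ can be absorbed (giving (8)) as opposed to only the weaker boundedness statement of (9) surviving.
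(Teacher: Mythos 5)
Your proposal is correct and follows essentially the same route as the paper: reduce via Lemmas 3.3 and 3.4 with $\mathcal{V}=\mathcal{B}(G)$ (noting $f(xy)-f(yx)=\psi(x,y)-\psi(y,x)$ to justify the second hypothesis of Lemma 3.4), then refine the cases coming from Lemma 3.3(3) and Lemma 3.4(2),(3) by Hyers's theorem on amenable groups after dividing by the bounded multiplicative function, yielding assertions (3), (7), (8), (9). The only local difference is that in the case from Lemma 3.4(3) the paper invokes Sz\'{e}kelyhidi's sine-stability theorem to split into $H=a\,m+b$ versus the exact sine equation, while you apply Hyers directly to $H/m$ to get $H=a\,m+\varphi$ and split on $\varphi=0$ -- an equivalent dichotomy (your treatment, like the paper's, glosses over the degenerate possibility $m=0$, which is harmless since it forces boundedness and lands in an earlier case).
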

\begin{proof}First we prove the necessity. Applying the Lemma
3.3(1), Lemma 3.3(2), Lemma 3.3(4), Lemma 3.3(5), Lemma 3.4(1) and
Lemma 3.4(4) with $\mathcal{V}=\mathcal{B}(G)$ we get that either
one of the conditions (1), (2), (4), (5), (6), (10) in Theorem 4.1
is satisfied, or we have one of the following cases:\\
\underline{Case A:}
\begin{equation*}g+\frac{\lambda^{2}}{2}\,f=m-\lambda\,b\end{equation*}
and
\begin{equation*}h-\lambda\,f=b,\end{equation*}
where $\lambda\in\mathbb{C}$ is a constant,
$b:G\rightarrow\mathbb{C}$ is a bounded function and
$m:G\rightarrow\mathbb{C}$ is a bounded multiplicative function.
From (\ref{eq:eq5}) and the identities above we obtain, by an
elementary computation,
\begin{equation}\label{eq:eq56}g=-\frac{\lambda^{2}}{2}\,f+m-\lambda\,b,\end{equation}
\begin{equation}\label{eq:eq57}h=\lambda\,f+b\end{equation}
and
\begin{equation}\label{eq:eq58}f(xy)-f(x)m(y)-m(x)f(y)=\psi(x,y)+b(x)b(y)\end{equation}
for all $x,y\in G$. If $m\neq0$ then, by multiplying both sides of
(\ref{eq:eq58}) by $m((xy)^{-1})$, and using the fact that $m$ is a
bounded multiplicative function, and that the functions $b$ and
$\psi$ are bounded, we get that the function $(x,y)\mapsto
f(xy)m((xy)^{-1})-f(x)m(x^{-1})-f(y)m(y^{-1})$ is bounded. Notice
that we have the same result if $m=0$. So, according to Hyers's
theorem \cite[Theorem 3.1]{Székelyhidi1}, there exist an additive
function $a:G\rightarrow\mathbb{C}$ and a function
$\varphi_{0}\in\mathcal{B}(G)$ such that
$f(x)m(x^{-1})-a(x)=b_{0}(x)$ for all $x\in G$. Then, by putting
$\varphi=m\,\varphi_{0}$, we get that $f=a\,m+\varphi$ with
$\varphi\in\mathcal{B}(G)$. Substituting this back into
(\ref{eq:eq56}) and (\ref{eq:eq57}) we obtain, by an elementary
computation, that
$g=(1-\frac{\lambda^{2}}{2}\,a)m-\lambda\,b-\frac{\lambda^{2}}{2}\,\varphi$
and $h=\lambda\,a\,m+b+\lambda\,\varphi$. So the result (3)
of Theorem 4.1 holds.\\
\underline{Case B:}
\begin{equation*}\begin{split}f(xy)-\lambda^{2}\,M(xy)&=(f(x)-\lambda^{2}\,M(x))m(y)+m(x)(f(y)
-\lambda^{2}\,M(y))\\&+\lambda^{2}\,m(xy)+\psi(x,y)\end{split}\end{equation*}
for all $x,y\in G,$$$g=\frac{1}{2}\beta^{2}\,f+\beta\,h+m$$ and
$$\beta\,f+h=\lambda\,M-\lambda\,m,$$ where
$\beta\in\mathbb{C},\,\lambda\in\mathbb{C}\setminus\{0\}$ are
constants, $m,M:G\rightarrow\mathbb{C}$ are multiplicative functions
such that $m\in\mathcal{B}(G)$, $M\not\in\mathcal{B}(G)$ and $\psi$
is the function defined in (\ref{eq:eq5}). If $m\neq0$ then, by
multiplying both sides of the first identity above by $m((xy)^{-1})$
and using that $m$ is multiplicative, we get that
\begin{equation*}\begin{split}&(f(xy)-\lambda^{2}\,M(xy))m((xy)^{-1})\\&=(f(x)-\lambda^{2}\,M(x))m(x^{-1})+(f(y)
-\lambda^{2}\,M(y))m(y^{-1})+\lambda^{2}+m((xy)^{-1})\psi(x,y)\end{split}\end{equation*}
for all $x,y\in G$. Since the functions $m$ and $\psi$ are bounded,
then we get from the identity above that the function
\begin{equation*}\begin{split}&(x,y)\mapsto
(f(xy)-\lambda^{2}\,M(xy))m((xy)^{-1})-(f(x)-\lambda^{2}\,M(x))m(x^{-1})\\&-(f(y)
-\lambda^{2}\,M(y))m(y^{-1})\end{split}\end{equation*} is bounded.
Notice that we have the same result if $m=0$. So, according to
Hyers's theorem \cite[Theorem 3.1]{Székelyhidi1}, there exist an
additive function $a:G\rightarrow\mathbb{C}$ and a function
$b_{0}\in\mathcal{B}(G)$ such that
$$(f(x)-\lambda^{2}\,M(x))m(x^{-1})-a(x)=b_{0}(x)$$ for all $x\in
G$. Then, by putting $b=m\,b_{0}$. we derive that
$$f=\lambda^{2}\,M+a\,m+b$$ with $b\in \mathcal{B}(G)$. As
$g=\frac{1}{2}\beta^{2}\,f+\beta\,h+m$ and
$\beta\,f+h=\lambda\,M-\lambda\,m,$ we obtain
\begin{equation*}\begin{split}h&=-\beta(\lambda^{2}\,M+a\,m+b)+\lambda\,M-\lambda\,m\\
&=\lambda(1-\beta\lambda)M-\lambda\,m-\beta\,a\,m-\beta\,b\end{split}\end{equation*}
and
\begin{equation*}\begin{split}g&=\frac{1}{2}\beta^{2}(\lambda^{2}\,M+a\,m+b)+\beta(\lambda(1-\beta\lambda)M-\lambda\,m-\beta\,a\,m-\beta\,b)+m\\
&=\beta\lambda(1-\frac{1}{2}\beta\lambda)M+(1-\beta\lambda)m-\frac{1}{2}\beta^{2}\,a\,m-\frac{1}{2}\beta^{2}\,b.
\end{split}\end{equation*}
The result occurs in (7) of Theorem 4.1.\\
\underline{Case C:}
\begin{equation*}f(xy)=f(x)m(y)+m(x)f(y)+H(x)H(y)+\psi(x,y),\end{equation*}
\begin{equation*}\begin{split}&H(xy)-H(x)m(y)-m(x)H(y)=\eta_{1}\,\psi(x,y)+\eta_{2}\,m(x)L_{1}(y)+\eta_{3}\,m(x)L_{2}(y)\\
&+\eta_{4}\,\psi(x,l_{1}(y))+\eta_{5}\,\psi(x,l_{2}(y))+\eta_{6}\,L_{1}(xy)+\eta_{7}\,L_{2}(xy)\end{split}\end{equation*}
for all $x,y\in G$,\\
\begin{equation*}g=\frac{1}{2}\beta^{2}\,f+\beta\,h+m\end{equation*}
and
\begin{equation*}H=\beta\,f+h\end{equation*}
and where $\beta,\eta_{1},\cdot\cdot\cdot,\eta_{7}\in\mathbb{C}$ are
constants, $m:G\rightarrow\mathbb{C}$ is a bounded multiplicative
function, $L_{1},L_{2}\in\mathcal{B}(G)$, $l_{1},l_{2}:G\rightarrow
G$ are mappings, and $\psi$ is the function defined in (\ref{eq:eq5}).\\
If $H\in \mathcal{B}(G)$ then $f$ and $h$ are linearly dependent
modulo $\mathcal{B}(G)$. So, according to Lemma 3.3, on of the
assertions
(1)-(5) of Theorem 4.1 holds.\\
In what follows we assume that $H\not\in\mathcal{B}(G)$. Since the
functions $m$, $L_{1}$, $L_{2}$ and $\psi$ are bounded, we get from
the above second identity that the function $$(x,y)\mapsto
H(xy)-H(x)m(y)-m(x)H(y)$$ is bounded. Hence $m\neq0$ because
$H\not\in\mathcal{B}(G)$. Then, according to \cite[Theorem
2.3]{Székelyhidi3} and taking the assumption on $H$ into account, we
have one
of the following subcases:\\
\underline{Subcase C.1:} $H=a\,m+b$, where
$a:G\rightarrow\mathbb{C}$ is additive and $b\in\mathcal{B}(G)$.
Then $\beta\,f+h=a\,m+b$, which implies that
$$h=-\beta\,f+a\,m+b.$$ Moreover, since
$g=\frac{1}{2}\beta^{2}\,f+\beta\,h+m$ we get that
$$g=-\frac{1}{2}\beta^{2}\,f+m+\beta\,a\,m+\beta\,b.$$
Let $x,y\in G$ be arbitrary. By using the first identity in the
present case, we get that
\begin{equation*}\begin{split}&\psi(x,y)=f(xy)-f(x)m(y)-m(x)f(y)-(a(x)m(x)+b(x))(a(y)m(y)+b(y))\\
&=f(xy)-f(x)m(y)-m(x)f(y)-a(x)a(y)m(xy)-m(x)a(x)b(y)\\
&-m(y)a(y)b(x)-b(x)b(y).\end{split}\end{equation*} Since $m$ is a
nonzero multiplicative function on the group $G$ we have
$m(xy)=m(x)m(y)\neq0$ and
$m((xy)^{-1})=m(x^{-1})m(y^{-1})=(m(x))^{-1}(m(y))^{-1}$. Hence, by
multiplying both sides of the identity above we get that
\begin{equation*}\begin{split}&m((xy)^{-1})[\psi(x,y)b(x)b(y)]=f(xy)m((xy)^{-1})-f(x)m(x^{-1})-f(y)m(y^{-1})\\
&-a(x)a(y)-a(x)b(y)m(y^{-1})-a(y)b(x)m(x^{-1})\\
&=(f(xy)m((xy)^{-1})-\frac{1}{2}a^{2}(xy))-(f(x)m(x^{-1})-\frac{1}{2}a^{2}(x))
\\&-(f(y)m(y^{-1})-\frac{1}{2}a^{2}(y))-a(x)b(y)m(y^{-1})-a(y)b(x)m(x^{-1}).\end{split}\end{equation*}
So, $x$ and $y$ being arbitrary, and the functions $m$, $b$ and
$\psi$ are bounded, we deduce that the function
\begin{equation*}\begin{split}&(x,y)\mapsto
f(xy)m((xy)^{-1})-\frac{1}{2}a^{2}(xy)-(f(x)m(x^{-1})-\frac{1}{2}a^{2}(x))\\
&-(f(y)m(y^{-1})-\frac{1}{2}a^{2}(y))-a(x)b(y)m(y^{-1})-a(y)b(x)m(x^{-1})\end{split}\end{equation*}
is bounded. The result occurs in (9) of the list of Theorem 4.1.\\
\underline{Subcase C.2:} $H(xy)=H(x)m(y)+H(y)m(x)$ for all $x,y\in
G$. Since $m$ is a nonzero multiplicative function on the group $G$
we have $m(x)\neq0$ for all $x\in G$. Then, in view of
$H\not\in\mathcal{B}(G)$, we get from the last functional equation
that there exists a nonzero additive function
$a:G\rightarrow\mathbb{C}$ such that $H=a\,m$. Substituting this
back in the first identity in the present case and proceeding
exactly as in Subcase C.1, we get that the function
\begin{equation*}\begin{split}&(x,y)\mapsto
2f(xy)m((xy)^{-1})-a^{2}(xy)-(2f(x)m(x^{-1})-a^{2}(x))\\
&-(2f(y)m(y^{-1})-a^{2}(y))\end{split}\end{equation*} is bounded.
Hence, according to Hyers's theorem \cite[Theorem
3.1]{Székelyhidi1}, there exist an additive function
$a_{1}:G\rightarrow\mathbb{C}$ and a function
$b_{0}\in\mathcal{B}(G)$ such that
$2f(x)m(x^{-1})-a^{2}(x)=a_{1}(x)+b_{0}(x)$ for all $x,y\in G$. So,
by putting $b=\frac{1}{2}m\,b_{0}$ we deduce that
$b\in\mathcal{B}(G)$ because $m,b_{0}\in\mathcal{B}(G)$ and
\begin{equation}\label{eq:eq59}f=\frac{1}{2}a^{2}\,m+\frac{1}{2}a_{1}\,m+b.\end{equation}Since $H=\beta\,f+h$ and
$g=\frac{1}{2}\beta^{2}\,f+\beta\,h+m$ we get, by using
(\ref{eq:eq59}) and an elementary computation, that
$g=-\frac{1}{4}\beta^{2}\,a^{2}\,m+\beta\,a\,m-\frac{1}{4}\beta^{2}\,a_{1}\,m+m-\frac{1}{2}\beta^{2}\,b$
and
$h=-\frac{1}{2}\beta\,a^{2}\,m+a\,m-\frac{1}{2}\beta\,a_{1}\,m-\beta\,b.$
The result occurs in (8) of the list of Theorem 4.1.\\
Conversely, we check by elementary computations that if one of the
assertions (1)-(10) in Theorem 4.1 is satisfied then the function
$(x,y)\mapsto f(xy)-f(x)g(y)-g(x)f(y)-h(x)h(y)$ is bounded. This
completes the proof of Theorem 4.1.
\end{proof}
\textbf{Acknowledgments.} The authors are grateful to referees for
the thorough review of this paper.
%---------------------------------------------------------------%

\end{document}